\newtheorem{remark}{Remark}
 \newtheorem{theorem}[remark]{Theorem}
 \newtheorem{proposition}[remark]{Proposition}
 \newtheorem{corollary}[remark]{Corollary}
\author{Dorota Kuziak\affiliationmark{1}
  \and Iztok Peterin\affiliationmark{2,3}\thanks{Partially supported by the Ministry of Science of Slovenia under the grant P1-0297.}
  \and Ismael G. Yero\affiliationmark{4}}
\title[Open $k$-monopolies in graphs: complexity and related concepts]{Open $k$-monopolies in graphs: complexity and related concepts}
\affiliation{
  Departament d'Enginyeria Inform\`atica i Matem\`atiques, Universitat Rovira i Virgili, Spain\\
  University of Maribor, Faculty of Electrical Engineering and Computer Science, Slovenia\\
  Institute of Mathematics, Physics and Mechanics, Slovenia\\
	Escuela Polit\'ecnica Superior, Universidad de C\'adiz, Spain}
\keywords{open $k$-monopolies, $k$-signed total domination,
global defensive $k$-alliance, global offensive $k$-alliance}
\begin{document}
\publicationdetails{18}{2016}{3}{6}{1407}
\maketitle
\begin{abstract}
  Closed monopolies in graphs have a quite long range of applications in several problems related to overcoming
failures, since they frequently have some common approaches around the notion of majorities, for instance
to consensus problems, diagnosis problems or voting systems. We introduce here open $k$-monopolies in graphs which
are closely related to different parameters in graphs. Given a graph $G=(V,E)$ and $X\subseteq V$, if $\delta_X(v)$
is the number of neighbors $v$ has in $X$, $k$ is an integer and $t$ is a positive integer, then we
establish in this article a connection between the following three concepts:
\begin{itemize}
	\item Given a nonempty set $M\subseteq V$ a vertex $v$ of $G$ is said to be $k$-controlled by $M$
	if $\delta_M(v)\ge \frac{\delta_V(v)}{2}+k$. The set $M$ is called an open $k$-monopoly for $G$ if it
	$k$-controls every vertex $v$ of $G$.
	\item  A function $f: V\rightarrow \{-1,1\}$ is called a signed total $t$-dominating function
	 for $G$ if $f(N(v))=\sum_{v\in N(v)}f(v)\geq t$ for all $v\in V$.
	\item A nonempty set $S\subseteq V$ is a global (defensive and offensive) $k$-alliance in $G$ if
	$\delta_S(v)\ge \delta_{V-S}(v)+k$ holds for every $v\in V$.
\end{itemize}
In this article we prove that the problem of computing the minimum cardinality of an open $0$-monopoly in
a graph is NP-complete even restricted to bipartite or chordal graphs. In addition we present some general bounds for the minimum cardinality
of open $k$-monopolies and we derive some exact values.
\end{abstract}

\section{Introduction}

We begin stating some terminology and notation which we will use. Throughout this article, $G$ denotes
a simple graph with vertex set $V(G)$ and edge set $E(G)$ (we will use only $V$ and $E$ if the graph is
clear from the context). The order of $G$ is $n = |V(G)|$ and the size is $m=|E(G)|$. We denote two
adjacent vertices $u$ and $v$ by $u \sim v$. Given a vertex $v\in V,$ the set $N(v)=\{u\in V: \; u\sim v\}$
is the {\em open neighborhood} of $v$, and  the set  $N[v]=N(v)\cup \{v\}$ is the {\em closed neighborhood}
of $v$. So, the  {\em degree} of a vertex $v \in V$ is  $\delta(v)=|N(v)|$. Given a set $S\subset V$, the
{\em open neighborhood} of $S$ is $N(S)=\bigcup_{v\in S} N(v)$ and the {\em closed neighborhood} of $S$
is $N[S]=N(S)\cup S$. The minimum and maximum degree of $G$ are denoted by $\delta(G)$ and $\Delta(G)$,
respectively (again we use $\delta$ and $\Delta$ for short if $G$ is clear from the context). For a nonempty
set $S \subseteq V$ and a vertex $v \in V$, $N_S(v)$ denotes the set of neighbors $v$ has in $S$, \emph{i.e.},
$N_S(v) = S\cap N(v)$. The degree of $v$ in $S$ will be denoted by $\delta_S(v) = |N_S(v)|$. Also,
$\overline{S}=V-S$ is the complement of a set $S$ in $V$ and $\partial S=N[S]-S$ is the boundary of a set
$S$. The subgraph of $G$ induced by a set $S$ is denoted by $\langle S\rangle$.

In the first article, see \cite{monopoly-one}, on closed monopolies in graphs (called monopolies there)
the following terminology was used. A vertex $v$ in $G$ is said to be controlled by a set $M\subset V$ if at least half of its
closed neighborhood is in $M$. The set $M$ is called a {\em closed monopoly} if it controls every vertex $v$ of $G$.
Equivalently, the set $M$ is a closed monopoly in $G$, if for any vertex $v\in V(G)$ it
follows that $|N[v]\cap M|\ge \left\lceil\frac{|N[v]|}{2}\right\rceil$. In this article, we introduce
open $k$-monopolies in a natural way, by replacing closed neighborhoods with open neighborhoods. Hence, we can use the degree
of vertices instead of cardinalities of closed neighborhoods. Given some integer $k$, a vertex $v$ of $G$ is said to be $k$-controlled
by a set $M$ if $\delta_M(v)\ge \frac{\delta(v)}{2}+k$. Analogously, the set $M$ is called  an {\em open} $k$-{\em monopoly}
if it $k$-controls every vertex $v$ of $G$. Notice that not for every value of $k$
there exists an open $k$-monopoly in $G$ (further on we give some suitable interval for such $k$). Also, note that, close and open
monopolies cannot be exactly compared, since in a closed monopoly a vertex $v$ also counts itself in controlling $v$, which is not the case in
any open monopoly. The smallest example is already $K_2$, where is only one vertex in a closed monopoly, but both vertices are necessary in an open $0$-monopoly.
Differently, there are only two vertices in a minimum open $0$-monopoly of $P_5$, while we need at least three vertices in every closed
monopoly of $P_5$. In this article, we are focused only in open $k$-monopolies. In this sense, from now on we omit the term ``open'' and just use the terminology of $k$-monopolies. On the other hand, we remain using the term closed monopoly whenever referring to some previous work on this topic.

According to \cite{small-coalition}, several problems related to overcoming failures have some common
approaches around the notion of majorities. Their ideas are directed toward decreasing, as much as possible, the
damage caused due to failed vertices; by maintaining copies of the most important data and performing a voting
process among the participating processors in situation that failures occur; and by adopting as true those data
stored at the majority of the not failed processors. This idea is also commonly used in some fault tolerant algorithms
including agreement and consensus problems (see \cite{consensus}), diagnosis problems (see \cite{diagnosis}) or voting systems
(see \cite{voting}), among other applications and references.

\cite{small-coalition} were interested into locality based on the
following facts. Frequently, processors running in a system are better aware of whatever happens in their neighborhood
than outside of it. Moreover, some distributed network models allow only for computations developed with local processors,
which means that, a processor can only obtain a data from other processors having a ``relative'' close distance from itself.
Therefore, it is more efficient to store data as locally as possible.

Nevertheless, there could exists also a risk in this way. If the voting is restricted to local neighborhoods, we could produce
a sufficiently large set of failures which will probably constitute the majority in some of
these neighborhoods. In this sense, see \cite{small-coalition}, the authors assert the following:
{\em once the voting is performed over subsets of vertices, the ability of failed vertices to influence the outcome of the votes
becomes not only a function of their number but also a function of their location in the network: well situated vertices can
acquire greater influence}. This simple fact led them to study the problem of characterizing the potential power of a set of
failures in a network of processors, and as a consequence, the study of (closed) monopolies in graphs.

Notions of closed monopolies in graphs were introduced first by \cite{monopoly-one}, where
several ideas regarding voting systems were described. Once such article appeared, a high number of researches were
devoted to such parameter and its relationship with other similar structures like (defensive and offensive) alliances, see
\cite{alliancesOne}, or signed dominating functions, see \cite{signed-one}, among other works. An interesting article,
where several of these connections are dealt with, is from \cite{fernau-survey}. Moreover, this article presents a possible
generalization of all these  (closed) monopolies-related structures which comprise them altogether. The complexity of closed
monopolies in graphs is also well studied. The NP-hardness of finding the minimum cardinality of a closed monopoly in a graph
is easy to observe as stated by \cite{monopoly-one}. In such work was also pointed out a conjecture concerning the inapproximability
of such problem. A weaker version of such conjecture has been proved by \cite{monopol6}. In addition some other
inapproximability results of this problem have appeared by \cite{mishra} and \cite{monopol2}. Particularly, in \cite{monopol2},
these results are centered in regular graphs. Moreover, there it is also proved that for the case of tree graphs,
a closed monopoly of minimum cardinality can be computed in linear time. On the other hand, see \cite{khoskah}, some relationships
and bounds for the minimum cardinality of closed monopolies in graphs are stated in terms of matchings and/or girths. Also, dynamic
closed monopolies has been introduced in connection with modeling some problems of spreading the influence in social networks
(see \cite{small-coalition,monopol3}). Other studies in dynamic closed monopolies can be found in \cite{monopol1} and in \cite{monopol4}.

\section{Concepts related to monopolies}

Many times mathematical concepts are defined independently in two or even more papers.
When this occurs, the equivalence sometimes is obvious (mostly when papers occur in the same
time period), but sometimes we need more effort to find the connection (mostly when there is
a longer time period between publications). This may yield not
sufficient effort of later authors with the history, but we rather present it as an enough important
concept to start to investigate it from different point of view.

The above holds (at least partial) for signed (total) domination (introduced first by \cite{heni} (by \cite{signed-total})) and for different
types of alliances (introduced first by \cite{alliancesOne}). We add monopolies to this list
and present these connections in this section.

\subsection{Alliances}

Alliances in graphs were introduced first by \cite{alliancesOne} and
generalized to $k$-alliances by \cite{kdaf,kdaf1}. After that several
works have been developed in this topic. Remarkable cases are
\cite{fava} and \cite{GlobalalliancesOne}. Relationships with different parameters
of the graphs have been obtained and the alliances of several families
of graphs have been studied. A nonempty set $S\subseteq V$ is a
\textit{defensive $k$-alliance} in $G$ for $k\in\{-\Delta, \ldots , \Delta\}$
if for every $v\in S$
\begin{equation}\label{cond-defensiva-grado2}
\delta_S(v)\geq \delta_{\overline{S}}(v)+k.
\end{equation}
Moreover,  for $k\in\{2-\Delta, \ldots , \Delta\}$, a nonempty set
$S \subseteq V$ is an \textit{offensive $k$-alliance} in $G$ if for
every $v\in \partial S$
\begin{equation}\label{cond-offensiva-grado1}
\delta_S(v)\geq \delta_{\overline{S}}(v)+k.
\end{equation}
A nonempty set $S\subseteq V$ is a \textit{powerful $k$-alliance} if
$S$ is a defensive $k$-alliance and an offensive ($k+2$)-alliance. A set $D$ is a \emph{dominating set} in $G$
if every vertex outside of $D$ is adjacent to at least one vertex of $G$.

A (defensive, offensive or powerful) $k$-alliance is called \emph{global}
if it is a dominating set. The \emph{global}
\textit{defensive $($offensive$)$ $k$-alliance number} of $G$, denoted
by $\gamma_k^d(G)$ ($\gamma_k^{o}(G)$), is defined as the minimum cardinality
of a global defensive (offensive) $k$-alliance in $G$. For
$k\in\{-\Delta,...,\Delta-2\}$, the  \emph{global} \textit{powerful $k$-alliance number}
of $G$, denoted by $\gamma_k^{p}(G)$, is defined as the minimum cardinality
of a global powerful $k$-alliance in $G$. A global powerful alliance of minimum
cardinality in $G$ is called a $\gamma_k^{p}(G)$-set of $G$. Notice that there
exist graphs not containing any global powerful $k$-alliance for some specific
values of $k$. In this sense, in this work we are interested in those graphs
having global powerful $k$-alliances. It means that whenever we study such an
alliances we are supposing that the graph contains it.

Notice that the terminology used for alliances provides a very useful tool which
can be used while proving several results, {\em i.e.}, a set of vertices $M$ is a
$k$-monopoly in $G$ if and only if for every vertex $v$ of $G$,
$\delta_M(v)\ge \delta_{\overline{M}}(v)+2k$ (from now we will call this expression
{\em the $k$-monopoly condition}) and we will say that $M$ is a $k$-monopoly in $G$
if and only if every $v$ of $G$ satisfies the $k$-monopoly condition for $M$.

An interesting possible generalization of alliances in graphs (and some other related parameters) is given by \cite{fernau-survey}. In this work is proposed a new framework, which the authors call $(D,O)$-alliances. The main idea of this allows not only to characterize several known variants of alliances, but also suggest a unifying framework for its study. In this sense, a \emph{$(D,O)$-alliance}, with $D,O\subseteq\mathbb{Z}$ in a graph $G=(V,E)$ is a set $S$ such that for any $v\in S$, $\delta_S(v)-\delta_{\overline{S}}(v)\in D$ and for any $v\in N(S)\setminus S$, $\delta_S(v)-\delta_{\overline{S}}(v)\in O$. According to this, it is clear to observe that a defensive $k$-alliance can be understood as a $(\{z\in\mathbb{Z}\,:\,z\geq k\},\mathbb{Z})$-alliance,
and an offensive $k$-alliance as a $(\mathbb{Z},\{z\in\mathbb{Z}\,:\, z\geq k\})$-alliance.

\subsection{Signed (total) domination}

Given a graph $G=(V,E)$ and a function $f: V\rightarrow \{-1,1\}$ we consider the following for $f$:
\begin{itemize}
\item $f$ is a \emph{signed dominating function} for $G$ if $f(N[v])=\sum_{u\in N[v]}f(u)\geq 1$,
for all $v\in V$.
\item $f$ is a \emph{signed total dominating function} for $G$ if $f(N(v))=\sum_{u\in N(v)}f(u)\geq 1$,
for all $v\in V$.
\item $f$ is a {\em signed $k$-dominating function} for $G$ if $f(N[v])\geq k$ for all $v\in V$.
\item $f$ is a {\em signed total $k$-dominating function} for $G$ if $f(N(v))\geq k$ for all $v\in V$.
\end{itemize}
The minimum weight $\sum_{v\in V}f(v)$ of a signed (total) ($k$-dominating) dominating function $f$ is the {\em signed $($total$)$ $(k$-domination$)$ number} of $G$ and they are denoted in the following way.\\

\noindent\begin{tabular}{|c|c|c|c|}
  \hline
  signed domination& signed total domination & signed $k$-domination & signed total $k$-domination \\ \hline
  $\gamma_{s}(G)$ & $\gamma_{st}(G)$ & $\gamma_{s}^k(G)$ & $\gamma_{st}^k(G)$ \\
  \hline
\end{tabular}
\\

Notice that, if $k=1$, then a signed (total) $1$-dominating function is a standard signed (total) dominating function for $G$. Also, any kind of signed (total) ($k$-dominating) dominating function $f$ of $G$ induces two disjoint sets of vertices $B_{1}$ and $B_{-1}$, such that for every
vertex $v\in B_i$, $f(v)=i$ with $i\in \{-1,1\}$. Hereby we will represent such a function $f$ by the sets $B_{1}$ and $B_{-1}$ induced by $f$ and we
will write $f=(B_{1},B_{-1})$. A signed (total) ($k$-dominating) dominating function $f$ of minimum weight is called a $\gamma$-function with
$\gamma\in \{\gamma_s(G),\gamma_{st}(G),\gamma_s^k(G),\gamma_{st}^k(G)\}$, respectively.

\subsection{Connections between concepts}

Observing the definitions of monopoly and alliance we see that both concepts are
closely related. That is, let $M$ be a $0$-monopoly in $G=(V,E)$ and let $v\in V$.
Hence, $v$ has at least half of its neighbors in $M$, {\em i.e.},
$\delta_M(v)\ge \frac{\delta(v)}{2}$, which leads to
$\delta_M(v)\ge \delta_{\overline{M}}(v)$. Since this is satisfied for every
vertex of $G$ we obtain that $M$ is a global defensive $0$-alliance and also
a global offensive $0$-alliance. On the contrary, let $A$ be a global defensive
$0$-alliance which is also a global offensive $0$-alliance in $G$. Hence, for
every vertex $u\in V$ we have that $\delta_A(v)\ge \delta_{\overline{A}}(v)$,
which leads to $\delta_A(v)\ge \frac{\delta(v)}{2}$. Therefore, $A$ is a $0$-monopoly.

\cite{kdaf} defined the concept of global powerful $k$-alliances. Nevertheless,
it was not taken into account the possibility of studying the cases in which a set is
a global defensive $k$-alliance and also a global offensive $k$-alliance. According
to the concept of monopoly we observe the importance of such a case, which is one of
our motivations to develop the present investigation.

We continue with a relationship between signed total domination, alliances and monopolies.

\begin{theorem}\label{sig-tot-monop-alliance}
Let $G=(V,E)$ be a graph and  let $k\in \left\{1,\ldots,\left\lfloor \frac{\delta(G)}{2}\right\rfloor\right\}$
be an integer. The following statements are equivalent:
\begin{enumerate}[{\rm (i)}]
\item $M\subset V$ is a $k$-monopoly in $G$;
\item $M$ is a global defensive $(2k)$-alliance and a global offensive $(2k)$-alliance in $G$;
\item $f=(B_1=M,B_{-1}=\overline{M})$ is a signed total $(2k)$-dominating function for $G$.
\end{enumerate}
Moreover, if $k=0$, then \emph{(i)} and \emph{(ii)} are also equivalent.
\end{theorem}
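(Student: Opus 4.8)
The plan is to establish (i)$\Leftrightarrow$(iii) directly by substitution, and (i)$\Leftrightarrow$(ii) by splitting the vertex set of $G$ into $M$ and $\overline{M}$ and matching the two halves of the $k$-monopoly condition against the defensive and offensive alliance inequalities; the one point that is not pure bookkeeping is that, when $k\ge 1$, every $k$-monopoly is automatically a dominating set, which is exactly what the ``global'' alliance notions demand on top of the defining inequalities.

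First I would unwind (iii). For the function $f=(B_1=M,B_{-1}=\overline{M})$ and any $v\in V$,
\[
f(N(v))=|N(v)\cap M|-|N(v)\cap\overline{M}|=\delta_M(v)-\delta_{\overline{M}}(v),
\]
so the requirement ``$f(N(v))\ge 2k$ for all $v\in V$'' is word for word the assertion that every vertex of $G$ satisfies the $k$-monopoly condition $\delta_M(v)\ge\delta_{\overline{M}}(v)+2k$, which, as recalled before the theorem statement, says precisely that $M$ is a $k$-monopoly. This gives (i)$\Leftrightarrow$(iii) with no restriction on $k$.

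Next, for (i)$\Rightarrow$(ii): starting from the $k$-monopoly condition at every vertex, I would first check domination. If $v\in\overline{M}$ then $\delta_M(v)\ge\delta_{\overline{M}}(v)+2k\ge 2k\ge 2>0$, so $v$ has a neighbour in $M$; moreover, since $\delta(G)\ge 2k\ge 2$, the set $M$ is nonempty. Hence $N[M]=V$ and $\partial M=\overline{M}$. The case split now finishes it: for $v\in M$ the $k$-monopoly condition is precisely inequality \eqref{cond-defensiva-grado2} with parameter $2k$, and for $v\in\overline{M}=\partial M$ it is precisely inequality \eqref{cond-offensiva-grado1} with parameter $2k$; together with domination this makes $M$ both a global defensive $(2k)$-alliance and a global offensive $(2k)$-alliance. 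For the converse (ii)$\Rightarrow$(i), being ``global'' already forces $M$ to be a dominating set, so again $\partial M=\overline{M}$; the defensive inequality supplies the $k$-monopoly condition on $M$ and the offensive inequality supplies it on $\overline{M}$, hence $M$ is a $k$-monopoly.

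Finally, for the ``moreover'' clause I would note that $k\ge 1$ was used only to force $\delta_M(v)>0$ for $v\in\overline{M}$, i.e.\ domination of $M$. When $k=0$ the same conclusion still holds provided $G$ has no isolated vertices (which is needed in any case for the signed-total-domination reformulation to make sense): if some $v\in\overline{M}$ had $\delta_M(v)=0$, then $\delta_{\overline{M}}(v)=\delta(v)\ge 1$ would contradict $\delta_M(v)\ge\delta_{\overline{M}}(v)$. With domination secured, the case split above applies verbatim with $2k=0$, yielding (i)$\Leftrightarrow$(ii). I expect this domination step to be the only real obstacle — the rest is direct substitution — and it is exactly why the full three-way equivalence is stated for $k\ge 1$ while the case $k=0$ has to be handled on its own.
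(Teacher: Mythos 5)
Your proof is correct, and at bottom it rests on the same two identities the paper uses, namely $\delta(v)=\delta_M(v)+\delta_{\overline{M}}(v)$ and $f(N(v))=\delta_M(v)-\delta_{\overline{M}}(v)$; but you organize the equivalences differently and, in one respect, more carefully. The paper dismisses (i)$\Leftrightarrow$(ii) as ``straightforward'' (it only records that the two inequalities $\delta_M(v)\ge \frac{\delta(v)}{2}+k$ and $\delta_M(v)\ge\delta_{\overline{M}}(v)+2k$ are equivalent) and then spends its effort on (ii)$\Leftrightarrow$(iii) via the two-case computation with $f$; you instead get (i)$\Leftrightarrow$(iii) in one line by the same substitution and route the alliance equivalence through (i). The genuine added value in your write-up is the treatment of the word ``global'': a defensive/offensive $(2k)$-alliance must also be a dominating set, and the paper's proof never verifies that a $k$-monopoly dominates. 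Your observation that $v\in\overline{M}$ forces $\delta_M(v)\ge 2k>0$ when $k\ge 1$, and your separate argument for $k=0$ (where domination follows only because $G$ has no isolated vertices, which is implicit in the standing hypotheses since otherwise $\gamma^{d}_{0}$-type global alliances and $\mathcal{M}_k$ are not both defined), fills exactly the step the paper leaves silent, and it also explains cleanly why the three-way equivalence is stated for $k\ge 1$ while $k=0$ only yields (i)$\Leftrightarrow$(ii). No gaps; the identification $\partial M=\overline{M}$ under domination is the right way to match the offensive condition, which is quantified over $\partial M$, against the monopoly condition, which is quantified over all of $V$.
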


\begin{proof}
The equivalence between (i) and (ii) is straightforward since for every set of vertices $M$
and every vertex $v$ of $G$, the conditions $\delta_M(v)\ge \frac{\delta(v)}{2}+k$ and
$\delta_M(v)\ge \delta_{\overline{M}}(v)+2k$ are equivalent for every $k\in \left\{0,\ldots,\left\lfloor \frac{\delta(G)}{2}\right\rfloor\right\}$.

Let $M$ be a global defensive $(2k)$-alliance and a global offensive $(2k)$-alliance in $G$.
Let the function $f:V\rightarrow \{-1,1\}$ be such that for any $v\in V$, it follows $f(v)=1$
if $v\in M$ and, $f(v)=-1$ otherwise. 
If $v\in M$, then since $M$ is a global defensive $(2k)$-alliance in $G$, we have that
\begin{align*}
f(N(v))&=f(N_M(v))+f(N_{\overline{M}}(v))\\
&=\delta_M(v)-\delta_{\overline{M}}(v)\\
&\ge \delta_{\overline{M}}(v)+2k-\delta_{\overline{M}}(v)\\
&=2k.
\end{align*}
Now, if $v\in \overline{M}$, then by using that $M$ is a global offensive $(2k)$-alliance in $G$, the same computation as above gives that
$f=(B_1=M,B_{-1}=\overline{M})$ is a signed total $(2k)$-dominating function for $G$.

On the other hand, let $f'=(B'_1,B'_{-1})$ be a signed total $(2k)$-dominating function for $G$. Let $M'=B'_1$ and let the vertex $u\in V$.
If $u\in M'$, then since $f'$ is a signed total $(2k)$-dominating function in $G$, we have that
\begin{align*}
\delta_{M'}(u)&=f'(N_{M'}(u))\\
&=f'(N(u))-f'(N_{\overline{M'}}(u))\\
&\ge 2k-f'(N_{\overline{M'}}(u))\\
&=\delta_{\overline{M'}}(u)+2k.
\end{align*}
Thus, $M'$ is a global defensive $(2k)$-alliance in $G$. Finally, since $f'$ is a signed total $(2k)$-dominating function for $G$, if $u\in \overline{M'}$, then as above we deduce that
$M'$ is a global offensive $(2k)$-alliance.
\end{proof}

The following corollary is a direct consequence of Theorem \ref{sig-tot-monop-alliance} (ii) and (iii). We omit the proof.

\begin{corollary}
Let $G=(V,E)$ be a graph and  let $k\in \left\{1,\ldots,\left\lfloor \frac{\delta(G)}{2}\right\rfloor\right\}$ be an integer. A set $M\subset V$ is a global defensive $k$-alliance and a global offensive $k$-alliance in $G$ if and only if $f=(B_1=M,B_{-1}=\overline{M})$ is a signed total $k$-dominating function for $G$.
\end{corollary}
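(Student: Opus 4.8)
The plan is to rerun, essentially word for word, the proof of the equivalence of parts~(ii) and~(iii) of Theorem~\ref{sig-tot-monop-alliance}, with $2k$ replaced by $k$ throughout. The observation that makes this legitimate is that nothing in that argument depends on the parity of the parameter: the chain of inequalities only uses that the parameter is a positive integer. Thus for even $k$ the corollary is exactly Theorem~\ref{sig-tot-monop-alliance}(ii)$\Leftrightarrow$(iii) applied with parameter $k/2$, and for odd $k$ the very same computation, unchanged, does the job.

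First I would prove the implication from alliances to the function. Set $f=(B_1=M,B_{-1}=\overline{M})$, so that $f(N(v))=\delta_M(v)-\delta_{\overline{M}}(v)$ for every $v\in V$. If $v\in M$, the defensive $k$-alliance inequality $\delta_M(v)\ge\delta_{\overline{M}}(v)+k$ immediately gives $f(N(v))\ge k$. If $v\in\overline{M}$, then $v\in\partial M$ because a global alliance is, by definition, a dominating set, so the offensive $k$-alliance inequality applies at $v$ and likewise yields $f(N(v))\ge k$; hence $f$ is a signed total $k$-dominating function for $G$.

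For the converse I would start from a signed total $k$-dominating function $f=(B_1=M,B_{-1}=\overline{M})$ and rewrite its defining condition as $\delta_M(u)-\delta_{\overline{M}}(u)=f(N(u))\ge k$ for every $u\in V$; reading this at $u\in M$ is the defensive $k$-alliance condition and reading it at $u\in\overline{M}$ is the offensive one. The main point requiring attention --- and the only place the hypothesis $k\ge 1$ is genuinely used --- is verifying that $M$ is a dominating set (hence in particular nonempty), so that these alliances are \emph{global}: for $v\notin M$ we have $\delta_M(v)\ge\delta_{\overline{M}}(v)+k\ge 1$, so $v$ has a neighbour in $M$. This last step is precisely the asymmetry that forces Theorem~\ref{sig-tot-monop-alliance} to drop statement~(iii) when $k=0$, and it is the one detail of the corollary worth spelling out rather than merely citing the theorem.
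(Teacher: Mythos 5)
Your proof is correct and follows the route the paper intends: the paper omits the proof, declaring the corollary a direct consequence of Theorem~\ref{sig-tot-monop-alliance}\,(ii)$\Leftrightarrow$(iii), and you simply rerun that argument with $2k$ replaced by $k$. Your two additions --- observing that the theorem as stated only covers even parameters so the odd case needs the (parity-independent) computation repeated, and spelling out that $k\ge 1$ is what guarantees $M$ is a dominating set so the alliances are global --- are exactly the details the paper's ``we omit the proof'' glosses over, and both are handled correctly.
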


Now we prove a connection between signed domination and powerful alliances.

\begin{theorem}\label{characterization}
Let $G=(V,E)$ be a graph and let $k\in \{0,\ldots,\delta(G)\}$. Then $S\subset V$ is a global
powerful $k$-alliance in $G$ if and only if $f=(B_1=S,B_{-1}=\overline{S})$ is a signed
$(k+1)$-dominating function for $G$.
\end{theorem}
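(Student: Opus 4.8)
The plan is to compare, vertex by vertex, the defining inequality of a signed $(k+1)$-dominating function with the three conditions bundled into the notion of a global powerful $k$-alliance: being a defensive $k$-alliance on $S$, an offensive $(k+2)$-alliance on $\partial S=N[S]-S$, and a dominating set. Fix $f=(B_1=S,B_{-1}=\overline{S})$, so $f(u)=1$ for $u\in S$ and $f(u)=-1$ for $u\in\overline{S}$. For a vertex $v\in V$ I would compute $f(N[v])$ in two cases. If $v\in S$, then $N[v]$ contributes $v$ itself together with $\delta_S(v)$ vertices of $S$ and $\delta_{\overline{S}}(v)$ vertices of $\overline{S}$, so $f(N[v])=1+\delta_S(v)-\delta_{\overline{S}}(v)$; hence $f(N[v])\ge k+1$ is equivalent to $\delta_S(v)\ge\delta_{\overline{S}}(v)+k$, which is the defensive $k$-alliance condition at $v$. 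If $v\in\overline{S}$, then $f(N[v])=\delta_S(v)-\delta_{\overline{S}}(v)-1$, so $f(N[v])\ge k+1$ is equivalent to $\delta_S(v)\ge\delta_{\overline{S}}(v)+(k+2)$, which is the offensive $(k+2)$-alliance condition at $v$.

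With these two identities the equivalence is almost immediate, except for one point that must be handled with care: ``global'' means $S$ is a dominating set, whereas the offensive condition is imposed only on $\partial S$, not on all of $\overline{S}$. The key observation resolving this is that for a vertex $v\in\overline{S}$ with $\delta_S(v)=0$ the inequality $f(N[v])\ge k+1$ reads $-1-\delta_{\overline{S}}(v)\ge k+1$, which is impossible for $k\ge 0$. Consequently, if $f$ is a signed $(k+1)$-dominating function, then every vertex of $\overline{S}$ has a neighbour in $S$, i.e.\ $S$ is dominating and $\partial S=\overline{S}$; and the same inequality applied to any vertex forces $S\neq\emptyset$ (otherwise $f(N[v])=-1-\delta(v)<k+1$). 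So the domination requirement is built into the signed-dominating condition for free, and this is precisely where the hypothesis $k\ge 0$ is used.

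For the forward direction I would assume $S$ is a global powerful $k$-alliance: by definition it is a defensive $k$-alliance, an offensive $(k+2)$-alliance, and dominating. For $v\in S$ the first identity together with the defensive condition gives $f(N[v])\ge k+1$; for $v\in\overline{S}$, domination yields $v\in\partial S$, so the second identity together with the offensive condition gives $f(N[v])\ge k+1$. Hence $f$ is a signed $(k+1)$-dominating function. For the converse, assuming $f$ is a signed $(k+1)$-dominating function, the observation above gives $S\neq\emptyset$ and $S$ dominating, so $\partial S=\overline{S}$; then reading the first identity for $v\in S$ yields the defensive $k$-alliance condition, and reading the second identity for $v\in\overline{S}=\partial S$ yields the offensive $(k+2)$-alliance condition, so $S$ is a global powerful $k$-alliance.

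The only real subtlety — and the step I expect to deserve the most attention — is exactly the bookkeeping around domination versus $\partial S$: one must verify that the signed-dominating inequality cannot hold at an undominated vertex of $\overline{S}$, so that ``signed $(k+1)$-dominating'' automatically upgrades the offensive $(k+2)$-alliance on $\partial S$ to one on all of $\overline{S}$ and supplies domination. Everything else is the routine two-case counting of $\pm 1$'s inside $N[v]$; the degenerate case $V=\emptyset$ can be excluded at the outset.
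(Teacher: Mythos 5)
Your proof is correct and follows essentially the same route as the paper's: the same two-case computation of $f(N[v])$ for $v\in S$ and $v\in\overline{S}$, matching the defensive $k$-alliance and offensive $(k+2)$-alliance inequalities term by term. The one place you go beyond the paper is the explicit check that the signed $(k+1)$-dominating inequality forces every vertex of $\overline{S}$ to have a neighbour in $S$ (so domination comes for free and $\partial S=\overline{S}$); the paper leaves this point implicit, so your added care is a genuine, if small, improvement rather than a different approach.
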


\begin{proof}
Let $S$ be a global powerful $k$-alliance in $G$. So, $S$ is a global defensive
$k$-alliance and a global offensive $(k+2)$-alliance in $G$. Let $f=(B_1=S,B_{-1}=\overline{S})$
be a function in $G$ and let $v\in V$. We consider the following cases.\\

\noindent \underline{Case 1: $v\in S$.} Since $S$ is a global defensive $k$-alliance in $G$, we have that
\begin{align*}
f(N[v])&=f(N_S(v))+f(N_{\overline{S}}(v))+1\\
&=\delta_S(v)-\delta_{\overline{S}}(v)+1\\
&\ge \delta_{\overline{S}}(v)+k-\delta_{\overline{S}}(v)+1\\
&=k+1.
\end{align*}

\noindent \underline{Case 2: $v\in \overline{S}$.} Since $S$ is a global offensive $(k+2)$-alliance in $G$, we have that
\begin{align*}
f(N[v])&=f(N_S(v))+f(N_{\overline{S}}(v))-1\\
&=\delta_S(v)-\delta_{\overline{S}}(v)-1\\
&\ge \delta_{\overline{S}}(v)+k+2-\delta_{\overline{S}}(v)-1\\
&=k+1.
\end{align*}
Thus, $f=(B_1=S,B_{-1}=\overline{S})$ is a signed $(k+1)$-dominating function for $G$.

On the other hand, let $f'=(B'_1,B'_{-1})$ be a signed $(k+1)$-dominating function in $G$. We will show
that $A=B'_1$ is a global powerful $k$-alliance in $G$. Let $u\in V$. We consider the following.\\

\noindent \underline{Case 3: $u\in A$.} Since $f'$ is a signed $(k+1)$-dominating function for $G$, we have that
\begin{align*}
\delta_A(u)&=f'(N_A(u))\\
&=f'(N[u])-f'(N_{\overline{A}}(u))-1\\
&\ge k+1-f'(N_{\overline{A}}(u))-1\\
&=\delta_{\overline{A}}(u)+k.
\end{align*}
Thus, $A$ is a global defensive $k$-alliance in $G$.\\

\noindent \underline{Case 4: $u\in \overline{A}$.} Since $A$ is a signed $(k+1)$-dominating function in $G$, we have that
\begin{align*}
\delta_A(u)&=f'(N_A(u))\\
&=f'(N[u])-f'(N_{\overline{A}}(u))+1\\
&\ge k+1-f'(N_{\overline{A}}(u))+1\\
&=\delta_{\overline{A}}(u)+k+2
\end{align*}
Thus, $A$ is a global offensive $(k+2)$-alliance and, as a consequence, $A$ is
a global powerful $k$-alliance in $G$. Therefore, the proof is complete.
\end{proof}

\begin{corollary}
For any graph $G$ of order $n$ and any integer $k\in \{0,\ldots,\delta(G)\}$,
$$\gamma_s^{k+1}(G)=2\gamma_k^{p}(G)-n.$$
\end{corollary}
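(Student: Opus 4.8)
The plan is to deduce the identity directly from Theorem~\ref{characterization}, the only extra ingredient being a one-line computation of the weight of the function attached to a set. First I would observe that if $S\subseteq V$ with $|S|=s$ and $f=(B_1=S,B_{-1}=\overline{S})$, then
\[
\sum_{v\in V}f(v)=|B_1|-|B_{-1}|=s-(n-s)=2s-n .
\]

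Next, Theorem~\ref{characterization} tells us precisely that the assignment $S\mapsto f_S:=(B_1=S,\,B_{-1}=\overline{S})$ is a bijection from the family of global powerful $k$-alliances of $G$ onto the family of signed $(k+1)$-dominating functions of $G$; its inverse sends $f'=(B'_1,B'_{-1})$ to $B'_1$. Combining this with the weight formula above, each signed $(k+1)$-dominating function has weight $2|S|-n$, where $S$ is the corresponding powerful $k$-alliance, and conversely each powerful $k$-alliance $S$ yields such a function of weight $2|S|-n$. Taking minima on both sides, and using that $n$ is fixed so that minimizing $2|S|-n$ is the same as minimizing $|S|$, gives
\[
\gamma_s^{k+1}(G)=\min_{S}\bigl(2|S|-n\bigr)=2\min_{S}|S|-n=2\gamma_k^{p}(G)-n ,
\]
where $S$ ranges over the global powerful $k$-alliances of $G$.

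The only point that deserves a word is existence: the statement makes sense only when $G$ actually admits a global powerful $k$-alliance, which by Theorem~\ref{characterization} is equivalent to $G$ admitting a signed $(k+1)$-dominating function, so the two sides are simultaneously well defined; the paper has already restricted attention to such graphs, so nothing further is required. Beyond that there is no obstacle: all of the combinatorial content is carried by Theorem~\ref{characterization}, and the proof amounts only to the weight bookkeeping above, so I would expect it to occupy just a few lines.
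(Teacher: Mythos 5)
Your proof is correct and follows essentially the same route as the paper: apply Theorem~\ref{characterization} to identify global powerful $k$-alliances with signed $(k+1)$-dominating functions, compute the weight of $f=(B_1=S,B_{-1}=\overline{S})$ as $2|S|-n$, and pass to minima. If anything, you are slightly more careful than the paper, which asserts directly that the function associated to a $\gamma_k^{p}(G)$-set has minimum weight; your explicit remark that the correspondence is a bijection and that minimizing $2|S|-n$ is equivalent to minimizing $|S|$ is exactly the justification that step needs.
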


\begin{proof}
Let $S$ be a $\gamma_k^{p}(G)$-set. By Theorem \ref{characterization}, $f=(B_1=S,B_{-1}=\overline{S})$ is a signed total $(k+1)$-dominating function of minimum weight in $G$. Thus $\gamma_s^{k+1}(G)=|S|-|\overline{S}|$. Since $|S|+|\overline{S}|=n$ and $\gamma_k^{p}(G)=|S|$, the result follows by adding these two equalities above.
\end{proof}

According to the above ideas we can resume the relationships which motivated our work in the following table.\\

\begin{center}
\begin{tabular}{|c|c|c|}
  \hline
  \mbox{$k$-monopoly ($k\ge 0$)} & $\Leftrightarrow$ & $\begin{array}{c}
                                                                       \mbox{Global defensive $(2k)$-alliance and} \\
                                                                       \mbox{global offensive $(2k)$-alliance}
                                                                     \end{array}
  $ \\  \hline
  \mbox{$k$-monopoly} ($k\ge 1$) & $\Leftrightarrow$ & \mbox{Signed total $(2k)$-domination} \\  \hline
  \mbox{Signed total $k$-domination ($k\ge 1$)} & $\Leftrightarrow$ & $\begin{array}{c}
                                                                       \mbox{Global defensive $k$-alliance and} \\
                                                                       \mbox{global offensive $k$-alliance}
                                                                     \end{array}
  $ \\  \hline
  \mbox{Signed $(k+1)$-domination ($k\ge 0$)} & $\Leftrightarrow$ & $\begin{array}{c}
                                                                       \mbox{Global defensive $k$-alliance and} \\
                                                                       \mbox{global offensive $(k+2)$-alliance}\\
                                                                       \mbox{(A global powerful $k$-alliance)}
                                                                     \end{array}
  $\\
  \hline
\end{tabular}\\
\end{center}

Notice that the definition of signed (total) $k$-dominating function is restricted to
$k\ge 1$ while $k$-alliances are defined for any $k\in \{-\Delta(G),\ldots,\Delta(G)\}$
and $k$-monopolies can be defined for some integer $k$ whose limits are presented further.
In this sense, these concepts being quite similar between them could be generalized for
$k$ being zero or negative. To obtain a meaningful negative lower bound for $k$-monopolies
we involve another well-known concept: total domination\footnote{A set $D$ is a \emph{total dominating set} in a graph $G$ if every vertex of $G$ is adjacent to a vertex of $D$. The minimum cardinality of a total dominating set is the \emph{total domination number}, denoted by $\gamma_t(G)$.}. Namely, every $k$-monopoly,
$k\geq 0$ is also a total dominating set for $G$. To remain this property also for $k<0$, we
need to demand $\delta_M(v)\geq 1$ for every $v\in V(G)$. Therefore in this work we propose
the following definition of monopolies and we study some of its mathematical properties.

Given a integer $k\in \left\{1-\left\lceil \frac{\delta(G)}{2}\right\rceil,\ldots,\left\lfloor \frac{\delta(G)}{2}\right\rfloor\right\}$
and a set $M$, a vertex $v$ of $G$ is said to be $k$-{\em controlled} by $M$ if $\delta_M(v)\ge \frac{\delta(v)}{2}+k$.
The set $M$ is called a $k$-{\em monopoly} if it $k$-controls every vertex $v$ of $G$.
The minimum cardinality of any $k$-monopoly is the $k$-{\em monopoly number} and it is
denoted by $\mathcal{M}_k(G)$. A monopoly of cardinality $\mathcal{M}_k(G)$ is called
a $\mathcal{M}_k(G)$-set. In particular notice that for a graph with a leaf (vertex of degree one), there
exist only $0$-monopolies and the neighbor of every leaf is in each $\mathcal{M}_0$-set.

Notice that every non trivial graph $G$ contains at least one $k$-monopoly, with
$k\in \left\{1-\left\lceil \frac{\delta(G)}{2}\right\rceil,\right.$ $\left.\ldots,\left\lfloor \frac{\delta(G)}{2}\right\rfloor\right\}$,
since every vertex of $G$ satisfies
the $k$-monopoly condition for the whole vertex set $V(G)$. Also, if $G$ has an isolated
vertex, $\mathcal{M}_k(G)$ does not exists. But if $G$ has no isolated vertices, then, since
$\mathcal{M}_k(G)$-set is also a total dominating set, we have $\mathcal{M}_k(G)\geq 2$.
Thus, we can say that in
general for any graph $G$ of order $n$, $2\le \mathcal{M}_k(G)\le n$.

The last result of this section reveals a connection between
$\mathcal{M}_{1-\left\lceil \frac{\delta(G)}{2}\right\rceil}(G)$ and $\gamma_t(G)$.

\begin{theorem}\label{regular}
For any $r$-regular graph $G$,
$$\mathcal{M}_{1-\left\lceil \frac{r}{2}\right\rceil}(G)=\gamma_t(G).$$
\end{theorem}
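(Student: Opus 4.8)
The plan is to unwind the definition of the $k$-monopoly condition in the regular case and observe that for this particular choice of $k$ it collapses to the total domination condition $\delta_M(v)\ge 1$ for all $v\in V$. Write $k=1-\left\lceil \frac{r}{2}\right\rceil$. Since $G$ is $r$-regular we have $\delta(v)=r$ for every $v\in V$, so the $k$-monopoly condition for a set $M$ reads $\delta_M(v)\ge \frac{r}{2}+k$ for all $v\in V$. First I would evaluate the threshold $\frac{r}{2}+k=\frac{r}{2}+1-\left\lceil \frac{r}{2}\right\rceil$ in the two parity cases: if $r$ is even then $\left\lceil \frac{r}{2}\right\rceil=\frac{r}{2}$ and the threshold equals $1$; if $r$ is odd then $\left\lceil \frac{r}{2}\right\rceil=\frac{r+1}{2}$ and the threshold equals $\frac{1}{2}$.

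Next, since $\delta_M(v)$ is always a nonnegative integer, the inequality $\delta_M(v)\ge \frac{1}{2}$ is equivalent to $\delta_M(v)\ge 1$. Hence in both parity cases the $k$-monopoly condition for $M$ is equivalent to requiring $\delta_M(v)\ge 1$ for every $v\in V$, i.e.\ that every vertex of $G$ has a neighbor in $M$; this is exactly the definition of a total dominating set. Therefore $M$ is a $k$-monopoly in $G$ (with $k=1-\left\lceil \frac{r}{2}\right\rceil$) if and only if $M$ is a total dominating set of $G$.

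Taking minimum cardinalities over the two (identical) families of sets then yields $\mathcal{M}_{1-\left\lceil \frac{r}{2}\right\rceil}(G)=\gamma_t(G)$; note that both parameters are well defined here because $G$, being nontrivial and $r$-regular with $r\ge 1$, has no isolated vertices and hence admits a total dominating set. There is essentially no hard step: the only mild subtlety is the parity/ceiling computation of the threshold and the appeal to integrality of $\delta_M(v)$ to absorb the value $\frac{1}{2}$ in the odd case, which is precisely the reason the lower endpoint of the admissible interval for $k$ was defined with a ceiling rather than a floor.
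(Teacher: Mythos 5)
Your proof is correct and follows essentially the same route as the paper: both compute the threshold $\frac{r}{2}+1-\left\lceil\frac{r}{2}\right\rceil\in\{1,\frac{1}{2}\}$ by parity and use the integrality of $\delta_M(v)$ to reduce the monopoly condition to $\delta_M(v)\ge 1$, i.e.\ to total domination. The only cosmetic difference is that you phrase it as a single equivalence of set families while the paper proves the two inequalities separately.
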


\begin{proof} Let $q=\frac{r}{2}+1-\left\lceil \frac{r}{2}\right\rceil$ and let $M$ be a
$\mathcal{M}_{1-\left\lceil \frac{r}{2}\right\rceil}(G)$-set.  If $r$ is even, then
$q=1$ and if $r$ is odd, then $q=\frac{1}{2}$. In both cases, for any vertex $v$ of $G$, $\delta_M(v)\geq 1$,
since $\delta_M(v)$ is an integer. Hence $M$ is a total dominating set and
$\mathcal{M}_{1-\left\lceil \frac{r}{2}\right\rceil}(G)\geq \gamma_t(G)$. If $A$ is a $\gamma_t(G)$-set, then for every vertex
$v\in V$ we obtain $\delta_A(v)\geq 1\geq q$, since $q\in \{\frac{1}{2},1\}$. Thus, $A$ is
also a $(1-\left\lceil \frac{r}{2}\right\rceil)$-monopoly and $\mathcal{M}_{1-\left\lceil \frac{r}{2}\right\rceil}(G)\leq \gamma_t(G)$, which yields the equality.
\end{proof}

\section{Complexity}

Studies about complexity of signed domination were first presented by \cite{heni}. After that \cite{heni2} has shown that signed
total domination problem is NP-complete even restricted to bipartite or chordal
graphs. This last work was continued by \cite{liang}, where the NP-completeness
of signed (total) $k$-domination problem was shown for $k\geq 2$. Consequently, by Theorem \ref{sig-tot-monop-alliance}
the $k$-monopoly problem is also NP-complete for every $k\geq 1$. Hence, it remains to investigate the complexity of $k$-monopolies
for $1-\left\lceil \frac{\delta(G)}{2}\right\rceil\leq k\leq 0$. As mentioned in the introduction,
the complexity and also several inapproximation results are known for a closed monopolies, see \cite{mishra,monopol2,monopol6,monopol3}.

On the other hand, also the global defensive $k$-alliance problem is NP-complete (unpublished manuscript \cite{fernau-PC}) as well as global
offensive $k$-alliance problem (see \cite{fernau-off}), but not both together. Notice
that global powerful $k$-alliance problem is NP-complete as shown by \cite{fernau} but,
as we mention before, a global powerful $k$-alliance is a global defensive $k$-alliance
and a global offensive $(k+2)$-alliance. Here we follow a similar approach as
\cite{heni} to show that $0$-monopoly problem is NP-complete. We will show the
polynomial time reduction on the total domination set problem:\newline\medskip

Problem: \textbf{TOTAL\ DOMINATION\ SET (TDS)}

\textbf{INSTANCE:} A graph $G$ and a positive integer $k\leq |V(G)|$.

\textbf{QUESTION:} Is $\gamma _{t}(G)\leq k$?\newline \medskip

Problem: \textbf{0-MONOPOLY}

\textbf{INSTANCE:} A graph $G$ and a positive integer $k\leq |V(G)|$.

\textbf{QUESTION:} Is $\mathcal{M}_0(G)\leq k$? \\ \medskip

Recall that the total domination set problem is NP-complete even when restricted to
bipartite graphs (see \cite{LaPf}) or to chordal graphs (see
\cite{Pfaf}).\bigskip

\begin{theorem}
Problem 0-MONOPOLY is NP-complete, even when restricted to bipartite or chordal graphs.
\end{theorem}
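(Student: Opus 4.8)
The plan is first to observe that $0$-MONOPOLY lies in NP — given a set $M$ one checks $|M|\le k$ and $\delta_M(v)\ge\delta(v)/2$ for every $v$ in linear time — and then to reduce from TOTAL DOMINATION SET, which the excerpt has recalled to be NP-complete on bipartite graphs and on chordal graphs; crucially, the reduction must preserve these two classes.

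Given a TDS instance $(G,k)$, where we may assume $G$ has no isolated vertices, I would build $G'$ by attaching a gadget at every vertex $v$ of $G$: if $\delta_G(v)=d\ge 2$, add $d-1$ pendant paths of length three, i.e. for $j=1,\dots,d-1$ introduce new vertices $w_{v,j},z_{v,j},\ell_{v,j}$ and edges $vw_{v,j}$, $w_{v,j}z_{v,j}$, $z_{v,j}\ell_{v,j}$; vertices of degree one in $G$ receive no gadget. Each gadget is a tree glued to $G$ at the single vertex $v$, so $G'$ is bipartite whenever $G$ is (propagate the $2$-colouring along the paths) and chordal whenever $G$ is (no new cycle is created); since $|V(G')|=n+3(2m-n)$ the construction is polynomial. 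I would then put $k'=k+2(2m-n)$ and claim $\gamma_t(G)\le k$ if and only if $\mathcal{M}_0(G')\le k'$ (note $G'$ has leaves, so only $0$-monopolies exist, consistently with the statement).

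The verification rests on a few structural observations. Every leaf $\ell_{v,j}$ forces its neighbour $z_{v,j}$ into any $0$-monopoly $M$ (by the remark on neighbours of leaves made earlier), while the buffer vertex $w_{v,j}$ shields $v$, so $v$ need not lie in $M$; since $z_{v,j}\in M$ the condition at $w_{v,j}$ holds automatically, and the (degree-two) condition at $z_{v,j}$ forces at least one of $w_{v,j},\ell_{v,j}$ into $M$, so each of the $2m-n$ legs costs at least $2$, with cost exactly $2$ attained by choosing $w_{v,j}\in M$. Next, in $G'$ we have $\delta_{G'}(v)=2d-1$, so the monopoly condition at $v$ reads $\delta_M(v)\ge d$; writing $\delta_M(v)=|M\cap N_G(v)|+\#\{j:w_{v,j}\in M\}$ with the second summand at most $d-1$, this forces $|M\cap N_G(v)|\ge 1$, i.e. $S:=M\cap V(G)$ is a total dominating set of $G$ (the case $d=1$ gives the same statement, $v$ being a leaf of $G'$). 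Hence $|M|=|S|+\sum_{\text{legs}}|M\cap\{w_{v,j},z_{v,j},\ell_{v,j}\}|\ge \gamma_t(G)+2(2m-n)$; conversely, for a minimum total dominating set $D$ the set $M=D\cup\{\text{all }w_{v,j}\}\cup\{\text{all }z_{v,j}\}$ is readily checked to be a $0$-monopoly of size $|D|+2(2m-n)$. Thus $\mathcal{M}_0(G')=\gamma_t(G)+2(2m-n)$, which completes the reduction.

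The main obstacle is engineering the gadget so that three things hold simultaneously: the original vertices are not forced into $M$; the parity of $\delta_{G'}(v)$ makes the monopoly condition at $v$ collapse precisely to "one neighbour in $S$"; and there is no incentive to deviate from the intended optimum. Naive attempts fail one of these — attaching a single pendant to $v$ forces $v\in M$ and makes $\mathcal{M}_0(G')$ depend only on $n$, while attaching $d-1$ bare pendants to $v$ (so that a leaf directly sees $v$) makes it strictly cheaper to take $v$ into $M$ and discard $d-1$ leaves, so $\mathcal{M}_0(G')$ again loses contact with $\gamma_t$. Interposing the buffer vertex $w_{v,j}$ between $v$ and the leaf-bearing vertex $z_{v,j}$, and using exactly $d-1$ legs, is what reconciles the three requirements; once the gadget is fixed the rest is bookkeeping, the only delicate point being that degree-one vertices of $G$ become leaves of $G'$ and hence force their unique neighbour into $M$ — harmlessly, since total domination forces the same thing.
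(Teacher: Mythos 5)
Your proof is correct and follows essentially the same route as the paper: a reduction from TOTAL DOMINATION SET that attaches $\delta_G(v)-1$ pendant-path gadgets to each vertex $v$ so that its degree becomes $2\delta_G(v)-1$, the gadgets contribute a fixed forced cost, and the monopoly condition at $v$ collapses to having one original neighbour in the set. The only difference is cosmetic: you use pendant $P_3$'s (cost $2$ per leg, offset $2(2m-n)$) where the paper attaches $P_5$'s at their centres (cost $3$ per leg, offset $6m-3n$), and your lower-bound bookkeeping is if anything slightly cleaner since it avoids the paper's appeal to minimality to force the middle vertices into $M$.
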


\begin{proof} It is obvious that $0$-monopoly is a member of NP since for a given set $M$
with $|M|\leq k$ we can check in polynomial time
for each vertex $v$ of a graph $G$ if $v$ is controlled by $M$.

Let $G$ be a graph of order $n$ and size $m$. We construct a graph $H$ from $G$ as follows. For every vertex $v$
add $\delta_G(v)-1$ paths on five vertices and connect $v$ with an
edge to every middle vertex of these paths. Hence to obtain $H$ from $G$ we added
$5\sum_{v\in V(G)}(\delta_G(v)-1)=10m-5n$ vertices and the same amount
of edges. (Notice that we have added exactly $4m-2n$ leaves.) Clearly this can
be done in polynomial time. Also, if $G$ is bipartite or chordal graph, so is
$H$. Next we claim $\mathcal{M}_0(H)=6m-3n+\gamma _{t}(G)$.

To prove this, let $M$ be a $0$-monopoly of $H$. Let $v_{1}v_{2}v_{3}v_{4}v_{5}$
be an arbitrary path added to $G$. Clearly $v_{2},v_{4}\in M$, since they are unique
neighbors of $v_{1}$ and $v_{5}$, respectively. Moreover, if $v_{3}$ is not in $M$,
then both $v_{1}$ and $v_{5}$ must be in $M$ to control $v_{2}$ and
$v_{4}$, respectively. Since $M$ has minimum cardinality, this implies that $v_{3}\in M$. Let $v\in V(G)$.
By the above, $v$ has $\delta_G(v)-1$ neighbors in $M$ outside of $G$. Since $\delta_H(v)=2\delta_G(v)-1$, $v$ needs an additional
neighbor in $M\cap V(G)=P$ to be controlled by $M$. Hence, $P$ forms a total dominating
set of $G$ and so $\gamma _{t}(G)\leq |P|$. Altogether
\begin{eqnarray*}
\mathcal{M}_0(H)=|M|=|P|+3\sum_{v\in V(G)}(\delta_G(v)-1)\geq\gamma _{t}(G)+6m-3n.
\end{eqnarray*}

On the other hand, suppose $S$ is a $\gamma _{t}(G)$-set of $G$. We will show that
$M=S\cup \{v\in V(H)-V(G):\delta_H(v)>1\}$ is a $0$-monopoly for $H$.
Every vertex $v\in V(H)$ with $\delta_H(v)=1$ has a neighbor of degree two which is
in $M$. Without loss of generality, every vertex $v\in V(H)-V(G)$ with $\delta_H(v)=2$ has one neighbor of degree
1 and the other neighbor which is in $M$ and we have
$1=\delta_{M}(v)\geq \delta _{\overline{M}}(v)=1$. Every other vertex $v\in V(H)-V(G)$
has degree three, and two of its neighbors are in $V(H)-V(G)$ with degree two and
thus they are in $M$. Hence $2=\delta _{M}(v)\geq \delta _{\overline{M}}(v)=1$. It remains to
check vertices from $V(G)$. Let $v$ be a vertex
with $\deg _{H}(v)=2\deg _{G}(v)-1$. Since $S$ is a $\gamma _{t}(G)$ set, $v$ has
at least one neighbor in $S$ and additional $\delta_G(v)-1$ vertices in $M
$ in $V(H)-V(G)$. Altogether $v$ has at least $\delta_G(v)$ neighbors in $M$,
which is more than half of its neighbors. The next calculation ends the
proof of the claim:
\begin{eqnarray*}
\mathcal{M}_0(H) &\leq &|M|=|S|+|\{v\in V(H)-V(G):\delta_H(v)>1\}| \\
&=&\gamma _{t}(G)+3\sum_{v\in V(G)}(\delta_G(v)-1) \\
&=&\gamma _{t}(G)+6m-3n.
\end{eqnarray*}

Therefore, we have that if $j=6m-3n+k$, then $\gamma_{t}(G)\leq k$ if and only
if $\mathcal{M}_0(H)\leq j$ and the proof is completed.
\end{proof}

Once having studied the complexity of finding a $0$-monopoly in a graph, it remains to investigate the complexity for $1-\left\lceil \frac{\delta(G)}{2}\right\rceil\leq k\leq -1$, which we leave as an open problem.

\section{Bounding $\mathcal{M}_k(G)$}

In this section we present bounds for $\mathcal{M}_k(G)$ with respect to the minimum and maximum degrees of $G$
and with respect to the order and size. First notice that the $k$-monopoly condition $\delta_{M}(v)\geq \frac{\delta(v)}{2}+k$ is equivalent to the following expressions:
\begin{equation}\label{cond-monopoly-grado1}
\delta_{\overline{M}}(v)\leq \frac{\delta(v)}{2}-k.
\end{equation}

\begin{theorem}\label{general-bounds}
Let $G$ be a graph of order $n$, minimum degree $\delta$ and maximum degree
$\Delta$. Then for any integer
$k\in \left\{1-\left\lceil \frac{\delta(G)}{2}\right\rceil,\ldots,\left\lfloor \frac{\delta(G)}{2}\right\rfloor\right\}$,
$$\left\lceil\frac{\Delta+2k+2}{2}\right\rceil\le \mathcal{M}_k(G)\le n-\left\lfloor\frac{\delta-2k}{2}\right\rfloor.$$
\end{theorem}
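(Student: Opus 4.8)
The plan is to establish the two inequalities separately, each by producing the relevant set and checking the $k$-monopoly condition vertex by vertex using the equivalent formulations
$\delta_M(v)\ge \frac{\delta(v)}{2}+k$ and $\delta_{\overline{M}}(v)\le \frac{\delta(v)}{2}-k$ from~\eqref{cond-monopoly-grado1}.

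For the \emph{upper bound}, I would start from the simple observation that the whole vertex set $V$ is always a $k$-monopoly (as already noted in the text), and then try to remove as many vertices as possible while keeping the condition valid. Pick a vertex $w$ of minimum degree $\delta$. If we set $\overline{M}$ to be a set of $\left\lfloor\frac{\delta-2k}{2}\right\rfloor$ neighbors of $w$ (this is a nonnegative integer precisely because $k\le \left\lfloor\frac{\delta}{2}\right\rfloor$), and $M=V\setminus\overline{M}$, then for every vertex $v$ we have $\delta_{\overline{M}}(v)\le |\overline{M}| = \left\lfloor\frac{\delta-2k}{2}\right\rfloor \le \frac{\delta}{2}-k \le \frac{\delta(v)}{2}-k$, so $v$ is $k$-controlled. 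One must only double-check that the chosen neighbors actually exist, i.e.\ that $\left\lfloor\frac{\delta-2k}{2}\right\rfloor \le \delta = \delta(w)$, which is immediate since $k\ge 1-\left\lceil\frac{\delta}{2}\right\rceil$ forces the bound to be at most $\delta$. This yields $\mathcal{M}_k(G)\le |M| = n-\left\lfloor\frac{\delta-2k}{2}\right\rfloor$.

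For the \emph{lower bound}, let $M$ be any $k$-monopoly and let $u$ be a vertex of maximum degree $\Delta$. The $k$-monopoly condition at $u$ gives $\delta_M(u)\ge \frac{\Delta}{2}+k$. Now I would split into two cases according to whether $u\in M$ or $u\notin M$. If $u\notin M$, then $|M|\ge \delta_M(u)\ge \left\lceil\frac{\Delta}{2}+k\right\rceil = \left\lceil\frac{\Delta+2k}{2}\right\rceil$; if $u\in M$, then $M$ contains $u$ together with at least $\delta_M(u)$ of its neighbors, so $|M|\ge \delta_M(u)+1\ge \left\lceil\frac{\Delta+2k}{2}\right\rceil+1 = \left\lceil\frac{\Delta+2k+2}{2}\right\rceil$. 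The apparent difficulty is that the worse (smaller) estimate arises in the case $u\notin M$, which would only give $\left\lceil\frac{\Delta+2k}{2}\right\rceil$, not the claimed $\left\lceil\frac{\Delta+2k+2}{2}\right\rceil$.

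To close this gap, the key point is to pick $u$ more cleverly: instead of an arbitrary maximum-degree vertex, take $u$ to be a vertex of maximum degree lying \emph{inside} $M$ if one exists, or alternatively exploit a neighbor. Concretely, if $u\notin M$ then any neighbor $w$ of $u$ with $w\in M$ satisfies $N_M(u)\cup\{w\}\subseteq M$... — more simply, since $M$ is also a total dominating set (every $k$-monopoly is, as established in Section~2), every vertex of $M$ has a neighbor in $M$, and more to the point $M\ne\emptyset$ and $M$ dominates $u$, so $u$ has a neighbor $w\in M$; then the $\delta_M(u)\ge\frac{\Delta}{2}+k$ neighbors of $u$ in $M$ together with... this still only counts vertices adjacent to $u$. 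The cleanest fix is: among all vertices, the lower bound $|M|\ge \left\lceil\frac{\delta_M(v)+1}{1}\right\rceil$-type counting forces us to find a vertex $v$ with $v\in M$ and large $\delta_M(v)$. Since $M\neq\emptyset$, pick $v\in M$; $v$ has $\delta_M(v)\ge\frac{\delta(v)}{2}+k$ neighbors in $M$, all distinct from $v$, so $|M|\ge \frac{\delta(v)}{2}+k+1$. Maximizing the right-hand side over $v\in M$ is not obviously helpful, so the genuine content of the proof is to show some $v\in M$ has $\delta(v)\ge\Delta$ or at least $\delta(v)$ close to $\Delta$; I expect the intended argument instead applies the bound at a maximum-degree vertex $u$ and, when $u\notin M$, uses that $u$ has a neighbor in $M$ and then re-applies the condition at that neighbor — chasing this dependency is the part that requires care. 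This bookkeeping, rather than any deep idea, is the main obstacle, and once it is handled the stated inequality $\mathcal{M}_k(G)\ge\left\lceil\frac{\Delta+2k+2}{2}\right\rceil$ follows.
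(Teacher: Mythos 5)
Your upper-bound argument is essentially the paper's: take any set $A$ with $|A|=\left\lfloor\frac{\delta-2k}{2}\right\rfloor$ (the paper does not even require the vertices of $A$ to be neighbours of a minimum-degree vertex), observe that $\delta_A(v)\le |A|\le\frac{\delta(v)}{2}-k$ for every $v$, and conclude via (\ref{cond-monopoly-grado1}) that $\overline{A}$ is a $k$-monopoly. That part is correct and complete.

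The lower bound is where your proposal stops being a proof, and you have in fact put your finger on a genuine defect. Your two cases are exactly the paper's: if $u\in M$ then $|M|\ge\delta_M(u)+1\ge\frac{\Delta+2k+2}{2}$, while if $u\notin M$ one gets only $|M|\ge\delta_M(u)\ge\frac{\Delta+2k}{2}$. The paper ``resolves'' this by writing $|M|\ge\max\left\{\frac{\Delta+2k}{2},\frac{\Delta+2k+2}{2}\right\}$, which is a non sequitur: from a case analysis one may only conclude the minimum of the two case bounds. So the hole you found is also present in the published proof, and it cannot be patched, because the stated lower bound is false in general. Counterexample: let $V=\{u,w_1,w_2,w_3,x_1,x_2,x_3\}$, with $u$ adjacent to all six other vertices, $\{w_1,w_2,w_3\}$ inducing a triangle, and $w_ix_i$ an edge for each $i$. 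Then $\delta=2$, $\Delta=6$, $k=0$ is admissible, and $M=\{w_1,w_2,w_3\}$ is a $0$-monopoly, since $\delta_M(u)=3=\deg(u)/2$, $\delta_M(w_i)=2=\deg(w_i)/2$ and $\delta_M(x_i)=1=\deg(x_i)/2$; hence $\mathcal{M}_0(G)\le 3<4=\left\lceil\frac{\Delta+2}{2}\right\rceil$. What your (and the paper's) argument actually establishes unconditionally is $\mathcal{M}_k(G)\ge\left\lceil\frac{\Delta+2k}{2}\right\rceil$, the extra $+2$ being available only when some maximum-degree vertex lies in $M$ (as happens automatically in $K_n$, which is where the stronger form is later used). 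So do not keep chasing the ``bookkeeping'': either prove the weaker bound, or add the hypothesis that a maximum-degree vertex belongs to the monopoly.
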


\begin{proof}
Let $A$ be a set of vertices of $G$ such that $|A|=\left\lfloor\frac{\delta-2k}{2}\right\rfloor$
and let $v$ be a vertex of $G$. Hence $\delta_A(v)\le \left\lfloor\frac{\delta-2k}{2}\right\rfloor\le \frac{\delta-2k}{2}$. So,
$$\delta_{\overline{A}}(v)\ge \delta(v)-\frac{\delta-2k}{2}\ge\delta(v)-\frac{\delta(v)-2k}{2}=\frac{\delta(v)+2k}{2}.$$
Thus we have $2\delta_{\overline{A}}(v)\ge \delta(v)+2k=\delta_{\overline{A}}(v)+\delta_{A}(v)+2k$,
which leads to $\delta_{\overline{A}}(v)\ge \delta_{A}(v)+2k$. Therefore $\overline{A}$ is a $k$-monopoly in $G$ and the upper bound follows.

On the other hand, let $M$ be a $\mathcal{M}_k(G)$-set and let $u$ be a vertex of maximum degree in $G$. By (\ref{cond-monopoly-grado1}) we have that
$$\Delta =\delta_M(u)+\delta_{\overline{M}}(u)\le \delta_M(u)+\frac{\delta(u)}{2}-k=\delta_M(u)+\frac{\Delta}{2}-k,$$
which leads to $\frac{\Delta}{2}+k\le \delta_M(u)$. Now, if $u\in M$, then we obtain that $\frac{\Delta}{2}+k\le \delta_M(u)\le |M|-1$ and, as a consequence, $\frac{\Delta+2k+2}{2}\le |M|$. Conversely, if $u\notin M$, then $\frac{\Delta}{2}+k\le \delta_M(u)\le |M|$ which leads to $\frac{\Delta+2k}{2}\le |M|$. Therefore, $|M|\ge \displaystyle\max\left\{\frac{\Delta+2k}{2},\frac{\Delta+2k+2}{2}\right\}=\frac{\Delta+2k+2}{2}$ and the lower bound follows.
\end{proof}

As the following corollary shows the above bounds are tight.

\begin{corollary}\label{value-complete}
For every complete graph $K_n$ and every
$k\in \left\{1-\left\lceil \frac{\delta(G)}{2}\right\rceil,\ldots,\left\lfloor \frac{\delta(G)}{2}\right\rfloor\right\}$,
$$\mathcal{M}_k(K_n)=\left\lceil\frac{n+2k+1}{2}\right\rceil.$$
\end{corollary}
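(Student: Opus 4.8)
The plan is to invoke Theorem~\ref{general-bounds} directly, since the complete graph $K_n$ is $(n-1)$-regular and hence has $\delta = \Delta = n-1$. First I would substitute $\delta = \Delta = n-1$ into the lower bound of Theorem~\ref{general-bounds}, which gives immediately
$$\mathcal{M}_k(K_n) \ge \left\lceil \frac{(n-1)+2k+2}{2} \right\rceil = \left\lceil \frac{n+2k+1}{2} \right\rceil,$$
that is, the claimed value.

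Next I would handle the upper bound of Theorem~\ref{general-bounds}, which after the substitution $\delta = n-1$ reads $\mathcal{M}_k(K_n) \le n - \left\lfloor \frac{(n-1)-2k}{2} \right\rfloor$. The key observation is that $2k$ is an even integer, so it may be extracted from the floor using $\left\lfloor \frac{a+2k}{2} \right\rfloor = \left\lfloor \frac{a}{2} \right\rfloor + k$; here this yields $\left\lfloor \frac{(n-1)-2k}{2} \right\rfloor = \left\lfloor \frac{n-1}{2} \right\rfloor - k$. Hence the upper bound equals $n - \left\lfloor \frac{n-1}{2} \right\rfloor + k = \left\lceil \frac{n+1}{2} \right\rceil + k$, and pushing $k$ back inside the ceiling (again using that $2k$ is even) gives $\left\lceil \frac{n+2k+1}{2} \right\rceil$.

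Since in the regular case $\delta = \Delta = n-1$ both the lower and the upper bound of Theorem~\ref{general-bounds} collapse to $\left\lceil \frac{n+2k+1}{2} \right\rceil$, the equality in the corollary follows. I would only need to verify the harmless edge arithmetic, namely that $n - \left\lfloor \frac{n-1}{2} \right\rfloor = \left\lceil \frac{n+1}{2} \right\rceil$ for both parities of $n$, and that the admissible interval $\left\{1-\left\lceil \frac{n-1}{2}\right\rceil,\ldots,\left\lfloor \frac{n-1}{2}\right\rfloor\right\}$ is exactly the one for which Theorem~\ref{general-bounds} is stated, so that it applies. There is essentially no obstacle here; the only point requiring a little care is the consistent use of the identities $\left\lfloor \frac{a+2k}{2}\right\rfloor = \left\lfloor \frac{a}{2}\right\rfloor + k$ and $\left\lceil \frac{a+2k}{2}\right\rceil = \left\lceil \frac{a}{2}\right\rceil + k$ when moving the integer $k$ across the floor and ceiling operations.
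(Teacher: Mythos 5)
Your proposal is correct and takes essentially the same route as the paper: both invoke Theorem~\ref{general-bounds} with $\delta=\Delta=n-1$ and then check that the upper and lower bounds coincide. The only (cosmetic) difference is that the paper verifies the coincidence by a parity case analysis on $n-2k-1$, whereas you extract the even integer $2k$ from the floor and ceiling via the identities $\left\lfloor \frac{a+2k}{2}\right\rfloor=\left\lfloor \frac{a}{2}\right\rfloor+k$ and $\left\lceil \frac{a+2k}{2}\right\rceil=\left\lceil \frac{a}{2}\right\rceil+k$; both computations are equally valid.
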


\begin{proof}
From Theorem \ref{general-bounds} we have that
$\left\lceil\frac{n+2k+1}{2}\right\rceil\le \mathcal{M}_k(K_n)\le n-\left\lfloor\frac{n-2k-1}{2}\right\rfloor.$
If $n-2k-1$ is even, then $n+2k+1$ is even and we obtain that
$$\left\lceil\frac{n+2k+1}{2}\right\rceil\le\mathcal{M}_k(K_n)\le n-\left\lfloor\frac{n-2k-1}{2}\right\rfloor=n-\frac{n-2k-1}{2}=\frac{n+2k+1}{2}=\left\lceil\frac{n+2k+1}{2}\right\rceil.$$
On the other hand, if $n-2k-1$ is odd, then $n+2k+1$ is odd and we have that
$$\left\lceil\frac{n+2k+1}{2}\right\rceil\le\mathcal{M}_k(K_n)\le n-\left\lfloor\frac{n-2k-1}{2}\right\rfloor=n-\frac{n-2k-2}{2}=\frac{n+2k+2}{2}=\left\lceil\frac{n+2k+1}{2}\right\rceil.$$
\end{proof}

Next we obtain a lower bound for $\mathcal{M}_k(G)$ in terms of order and size of $G$.

\begin{theorem}\label{th-lower-bound-size}
For any graph $G$ of order $n$ and size $m$ and for every
$k\in \left\{1-\left\lceil \frac{\delta(G)}{2}\right\rceil,\ldots,\left\lfloor \frac{\delta(G)}{2}\right\rfloor\right\}-\{0\}$,
$$\mathcal{M}_k(G)\ge \left\lceil\frac{3kn-m}{2k}\right\rceil.$$
\end{theorem}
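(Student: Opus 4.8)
The plan is to turn the $k$-monopoly condition into a purely edge-counting statement and then sum it over $M$ and over $\overline{M}=V-M$ separately. Recall from the paper that $M$ is a $k$-monopoly in $G$ if and only if $\delta_M(v)\ge \delta_{\overline{M}}(v)+2k$ for every $v\in V$. Write $e_1=|E(\langle M\rangle)|$ and $e_3=|E(\langle \overline{M}\rangle)|$, and let $e_2$ be the number of edges of $G$ with one endpoint in $M$ and the other in $\overline{M}$, so that $m=e_1+e_2+e_3$.

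First I would sum the condition over $v\in M$. By the handshake identities $\sum_{v\in M}\delta_M(v)=2e_1$ and $\sum_{v\in M}\delta_{\overline{M}}(v)=e_2$, this gives $2e_1\ge e_2+2k|M|$. Next I would sum the same condition over $v\in\overline{M}$; now $\sum_{v\in\overline{M}}\delta_M(v)=e_2$ and $\sum_{v\in\overline{M}}\delta_{\overline{M}}(v)=2e_3$, which yields $e_2\ge 2e_3+2k|\overline{M}|$, and in particular $e_2\ge 2k|\overline{M}|$ since $e_3\ge 0$.

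Then I would combine these. From the first inequality, $e_1\ge \tfrac12 e_2+k|M|$, so
\[
m=e_1+e_2+e_3\ \ge\ \Big(\tfrac12 e_2+k|M|\Big)+e_2+0\ =\ \tfrac32 e_2+k|M|\ \ge\ \tfrac32\big(2k|\overline{M}|\big)+k|M|\ =\ 3k(n-|M|)+k|M|,
\]
that is, $m\ge 3kn-2k|M|$. Dividing by $2k$ (which preserves the inequality in the case $k>0$) and rearranging gives $|M|\ge \frac{3kn-m}{2k}$; since $|M|$ is an integer this gives $|M|\ge \big\lceil\frac{3kn-m}{2k}\big\rceil$, and applying it to a $\mathcal{M}_k(G)$-set finishes the proof.

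I do not expect a serious obstacle — it is essentially a careful double count — but the one real choice is which quantities to keep. The naive bounds $e_1\le \binom{|M|}{2}$ and $e_2\le |M|(n-|M|)$ lead only to a quadratic estimate, which is useless for a bound linear in $n$ and $m$; the point is instead to use $e_2$ as the single linking quantity between the two partial sums, eliminate $e_1$ in favour of $e_2$, discard the nonnegative term $e_3$, and only at the very end substitute the lower bound $e_2\ge 2k|\overline{M}|$ coming from summing the condition over $\overline{M}$. The remaining care is just getting the handshake identities and the orientation of the division by $2k$ right.
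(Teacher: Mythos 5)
Your proof is correct and follows essentially the same route as the paper's: both sum the $k$-monopoly condition over $M$ and over $\overline{M}$, use the cut $e_2=c(M,\overline{M})$ as the linking quantity, and arrive at $m\ge 3kn-2k|M|$ (the paper merely packages the first summation as $|E(\langle M\rangle)|\ge kn$ instead of keeping $e_2$ explicit, but the bookkeeping is identical). Your parenthetical about the sign of $2k$ is well taken: the final division is only valid for $k>0$, a point the paper's ``and the result follows'' silently glosses over for the negative values of $k$ admitted in the statement.
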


\begin{proof}
Let $M$ be a $\mathcal{M}_k(G)$-set. Since every vertex $v\in \overline{M}$ satisfies
that $\delta_M(v)\ge \delta_{\overline{M}}(v)+2k\ge 2k$, we have that
$c(M,\overline{M})\ge 2k|\overline{M}|=2k(n-|M|)$, where $c(M,\overline{M})$ is the edge
cut set between $M$ and $\overline{M}$. Since $\delta_M(v)\ge \delta_{\overline{M}}(v)+2k$ holds
for every vertex $v\in M$, we have
\begin{align*}
2k|\overline{M}|&\le c(M,\overline{M})\\
&=\sum_{v\in M}\delta_{\overline{M}}(v)\\
&\le \sum_{v\in M}(\delta_M(v)-2k)\\
&= 2|E(\langle M\rangle)|-2k|M|,
\end{align*}
which leads to $|E(\langle M\rangle)|\ge kn$. 
Since $m\ge |E(\langle M\rangle)|+c(M,\overline{M})$, we obtain that
$m\ge kn+2k(n-|M|)=3kn-2k|M|$ and the result follows.
\end{proof}

To see the tightness of the above bound we consider the following family $\mathcal{F}$ of graphs.
We begin with a complete graph $K_t$ with set of vertices $V=\{v_0,v_1,\ldots,v_{t-1}\}$ and $t-1\equiv 0\, (mod\; 4)$ and $t$ isolated vertices $U=\{u_0,u_1,\ldots,u_{t-1}\}$. From now on all the
operations with subindexes of $v_i$ or $u_i$ are done modulo $t$. To obtain a graph $G\in \mathcal{F}$,
for every $i\in \{0,\ldots,t-1\}$, we add the edges $u_iv_i$, $u_iv_{i+1}$,
$u_iv_{i+2}$, \ldots, $u_iv_{i+(t-3)/2}$. Notice that $G$ has order $2t$ and size $t(t-1)$ and every
vertex $v_i\in V$ has $\frac{t-1}{2}$ neighbors in $U$ and vice versa. Hence $\delta (G)=\frac{t-1}{2}$.
Suppose $k=\left\lfloor \frac{t-1}{4}\right\rfloor$. If $v\in V$, then
$\delta_V(v)=t-1=\frac{t-1}{2}+\frac{t-1}{2}=\delta_U(v)+\frac{t-1}{2}=\delta_U(v)+2k$. Also if $v\in U$, then $\delta_V(v)=\frac{t-1}{2}=\delta_U(v)+\frac{t-1}{2}=\delta_U(v)+2k$. Thus $V$ is a $k$-monopoly in $G$.
By Theorem \ref{th-lower-bound-size} we have $\mathcal{M}_k(G)=t$ and the bound is achieved.

\begin{theorem}
For any $r$-regular graph $G$ of order $n$ and for every
$k\in \left\{1-\left\lceil \frac{\delta(G)}{2}\right\rceil,\ldots,\left\lfloor \frac{\delta(G)}{2}\right\rfloor\right\}$,
$$\mathcal{M}_k(G)\ge \left\lceil\frac{n(2k+r)}{2r}\right\rceil.$$
\end{theorem}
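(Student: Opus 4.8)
The plan is a short double-counting argument, counting the edges incident with $M$ in two ways. Let $M$ be a $\mathcal{M}_k(G)$-set of the $r$-regular graph $G$. Since $\delta(v)=r$ for every $v\in V$, the $k$-monopoly condition $\delta_M(v)\ge\frac{\delta(v)}{2}+k$ reads simply $\delta_M(v)\ge\frac{r}{2}+k$ for all $v\in V$. Summing this over all $n$ vertices gives
$$\sum_{v\in V}\delta_M(v)\ \ge\ n\left(\frac{r}{2}+k\right)=\frac{n(r+2k)}{2}.$$

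Next I would evaluate the left-hand side by interchanging the order of summation: each $u\in M$ contributes $1$ to $\delta_M(v)$ for exactly each of its $\delta(u)=r$ neighbors $v$, so
$$\sum_{v\in V}\delta_M(v)=\sum_{v\in V}|N(v)\cap M|=\sum_{u\in M}\delta(u)=r|M|.$$
Combining the two displays yields $r|M|\ge\frac{n(r+2k)}{2}$, and since $r\ge1$ (for $r=0$ no $k$-monopoly exists) we may divide to obtain $|M|\ge\frac{n(2k+r)}{2r}$. As $|M|=\mathcal{M}_k(G)$ is an integer, the bound $\mathcal{M}_k(G)\ge\left\lceil\frac{n(2k+r)}{2r}\right\rceil$ follows.

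I do not expect any real obstacle here; the only point worth noting is that this argument is valid for every admissible $k$, \emph{including} $k=0$, whereas Theorem~\ref{th-lower-bound-size} (which for $r$-regular graphs, with $m=\frac{nr}{2}$, gives the lower bound $\left\lceil\frac{n(6k-r)}{4k}\right\rceil$) explicitly excludes $k=0$; moreover the two bounds coincide only at $k=\frac{r}{2}$, so the present estimate is genuinely a separate statement rather than a corollary of the earlier one. One may also remark that equality throughout forces $\delta_M(v)=\frac{r}{2}+k$ for every vertex, which is the route to discussing tightness.
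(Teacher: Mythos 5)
Your proof is correct and is essentially the same double-counting argument as the paper's: the paper sums the equivalent condition $\delta_M(v)-\delta_{\overline{M}}(v)\ge 2k$ over all vertices and uses $\sum_{v\in V}\delta_{\overline{M}}(v)=r|\overline{M}|$, which differs from your computation only by subtracting the identity $\sum_{v\in V}\delta(v)=nr$. Your side remarks (that $k=0$ is allowed here and that the bound coincides with Theorem~\ref{th-lower-bound-size} only at $k=r/2$) are accurate but not part of the paper's proof.
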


\begin{proof}
Let $V$ be the vertex set of $G$ and let $M$ be a $\mathcal{M}_k(G)$-set. For any vertex $v\in V$ and any $M\subset V$ we have that $\delta(v)=\delta_M(v)+\delta_{\overline{M}}(v)$. By subtracting $2\delta_{\overline{M}}(v)$ in both sides of the equality we obtain  $\delta(v)-2\delta_{\overline{M}}(v)=\delta_M(v)-\delta_{\overline{M}}(v)$. Making a sum for every vertex of $G$ and using the fact that $G$ is $r$-regular, it follows
$$\sum_{v\in V}(\delta_M(v)-\delta_{\overline{M}}(v))=\sum_{v\in V}(\delta(v)-2\delta_{\overline{M}}(v))=nr-2\sum_{v\in V}\delta_{\overline{M}}(v)= nr-2r|\overline{M}|=r|M|-r|\overline{M}|.$$
Thus, $\sum_{v\in V}(\delta_M(v)-\delta_{\overline{M}}(v))=r|M|-r|\overline{M}|$. Since every vertex $v\in V$ satisfies $\delta_M(v)\ge \delta_{\overline{M}}(v)+2k$, we have
$$2kn=\sum_{v\in V}2k\le  \sum_{v\in V}(\delta_M(v)-\delta_{\overline{M}}(v))=r|M|-r|\overline{M}|=2r|M|-rn$$
and the result follows.
\end{proof}

As we will see in Proposition \ref{cycles}, the above bound is tight. For instance, it is achieved for the case of cycles $C_{4t}$ for $k=0$.

\section{Exact values for $\mathcal{M}_{k}(G)$}

As already mentioned, for any graph $G$ of order $n$, $2\le \mathcal{M}_k(G)\le n$. We first characterize
the classes of graphs achieving the limit cases for these bounds.

\begin{proposition}
Let $G$ be a graph of order $n$. Then $\mathcal{M}_k(G)=2$ if and only if
$G$ is isomorphic to $P_2$, $P_3$, $P_4$, $C_3$ or $C_4$. Moreover, $k$ is either 0 or 1.
\end{proposition}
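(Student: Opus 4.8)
The plan is to prove the two implications separately, the forward (``only if'') direction being the substantial one. For the ``if'' direction I would argue as follows: each of the five listed graphs has minimum degree $1$ or $2$, so $k=0$ is an admissible value, and because all vertex degrees are then at most $2$ the $0$-monopoly condition $\delta_M(v)\ge\delta(v)/2$ simply reads $\delta_M(v)\ge 1$; thus a two-element $0$-monopoly is nothing but a two-element total dominating set. One exhibits such a set in each case: an edge for $P_2$ and for $C_3$, the two internal vertices for $P_4$, and a vertex together with one of its neighbours for $P_3$ and for $C_4$. The verification is immediate, and with the universal lower bound $\mathcal{M}_k(G)\ge 2$ it yields $\mathcal{M}_0(G)=2$. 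Since the admissible values of $k$ for the listed graphs are $0$ (paths) and $0$ or $1$ (cycles), and since a short computation (Corollary~\ref{value-complete} for $C_3=K_3$, and a direct check for $C_4$, where $k=1$ forces $M=V$) gives $\mathcal{M}_1(C_3)=3$ and $\mathcal{M}_1(C_4)=4$, the value realizing $\mathcal{M}_k(G)=2$ is in fact always $k=0$, which gives the ``moreover''.

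For the ``only if'' direction, suppose $\mathcal{M}_k(G)=2$ and fix a minimum $k$-monopoly $M=\{x,y\}$. Every $k$-monopoly is a total dominating set (as recalled before the statement), so each of $x$ and $y$ has a neighbour in $M$; since $G$ is simple this forces $x\sim y$, and then domination gives $V=N(x)\cup N(y)$. As $N(x)\cap M=\{y\}$, we have $\delta_M(x)=1$, so the $k$-monopoly condition at $x$ and at $y$ gives $\delta(x)\le 2-2k$ and $\delta(y)\le 2-2k$. Since $\delta(x)\ge 1$ this forces $k\le 0$, and combined with admissibility ($k\ge 1-\lceil\delta(G)/2\rceil$) we obtain $k=0$ whenever $\delta(G)\le 2$. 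Granting $k=0$, we get $\delta(x),\delta(y)\le 2$, hence $x$ has at most one neighbour other than $y$ and $y$ at most one other than $x$, so $n=|N(x)\cup N(y)|\le 4$. Furthermore, no vertex $v$ can have $\delta(v)\ge 3$: it would need $\delta_M(v)\ge\lceil 3/2\rceil=2$, forcing $M\subseteq N(v)$ (so $v\notin M$ and $v\sim x$, $v\sim y$), whence $N(x)=\{y,v\}$, $N(y)=\{x,v\}$ and $V=\{x,y,v\}$, which would make $\delta(v)\le 2$, a contradiction. So $\Delta(G)\le 2$, meaning $G$ is a disjoint union of paths and cycles on at most four vertices; discarding $2K_2$ (for which $\gamma_t=4>2$) leaves exactly $P_2,P_3,P_4,C_3,C_4$.

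The main obstacle is disposing of the remaining case $\delta(G)\ge 3$. There, $\delta(x)\ge\delta(G)\ge 3$ together with $\delta(x)\le 2-2k$ forces $k\le -1$, and intersecting the admissibility interval $\{1-\lceil\delta(G)/2\rceil,\dots,\lfloor\delta(G)/2\rfloor\}$ with the constraint $\delta(G)\le 2-2k$ pins $k$ down to $1-\lceil\delta(G)/2\rceil$. To finish, one would exploit the structure this imposes on $\overline{M}=V\setminus\{x,y\}$: each of its vertices has degree at least $3$ but at most two neighbours in $M$, so the bulk of its incident edges lie inside $\langle\overline{M}\rangle$, while $|\overline{M}|=n-2$ is constrained by $n\le\delta(x)+\delta(y)$; summing the $k$-monopoly inequalities over the vertices of $\overline{M}$ and comparing the total with this edge count, in the spirit of the proof of Theorem~\ref{th-lower-bound-size}, is the delicate step, with the parity of $\delta(G)$ requiring separate attention.
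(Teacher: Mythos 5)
Your argument for the case $\delta(G)\le 2$ (equivalently $k\ge 0$) is complete and correct, and it follows essentially the same route as the paper: take a minimum monopoly $M=\{x,y\}$, observe $x\sim y$ and $V=N(x)\cup N(y)$, bound the degrees of $x$, $y$ and of the outside vertices, and enumerate the surviving graphs. Your extra observation that the value $2$ is attained only for $k=0$ (since $\mathcal{M}_1(C_3)=3$ and $\mathcal{M}_1(C_4)=4$) is a genuine sharpening of the ``moreover'' clause, whose literal reading in the paper's own proof ($k\in\{0,1\}$ and $\mathcal{M}_k(G)=2$) is inaccurate for $k=1$.

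The genuine gap is the case you defer, $\delta(G)\ge 3$, which forces $k\le -1$; and you should be aware that no edge-counting argument in the spirit of Theorem \ref{th-lower-bound-size} can close it, because the statement is false there. Take $G=K_5$ and $k=-1$, which is admissible since $1-\lceil 4/2\rceil=-1$: for $M$ any edge $\{x,y\}$ one has $\delta_M(x)=1=\delta(x)/2-1$ and $\delta_M(w)=2>1$ for the remaining three vertices, so $\mathcal{M}_{-1}(K_5)=2$. This agrees with the paper's own Corollary \ref{value-complete}, which gives $\lceil(5-2+1)/2\rceil=2$, and with Theorem \ref{regular}, since $\gamma_t(K_5)=2$; so $K_5$ is a counterexample to the proposition as stated. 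The paper's proof conceals exactly the same gap: it asserts $\delta_{\overline{S}}(u)\le 1$ for $u\in S$, which follows from $1\ge\delta_S(u)\ge\delta_{\overline{S}}(u)+2k$ only when $k\ge 0$; for $k\le -1$ one gets only $\delta_{\overline{S}}(u)\le 1-2k$, and the whole enumeration collapses. The honest conclusion is that the proposition, and your proof of it, are correct precisely under the additional hypothesis $k\ge 0$; you located the delicate step correctly, but the right move there is to produce the counterexample rather than to try to finish the case.
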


\begin{proof}
If $G$ is isomorphic to $P_2$, $P_3$, $P_4$, $C_3$ or $C_4$, then $\delta(G)\leq 2$, $k\in \{0,1\}$ and
$\mathcal{M}_k(G)=2$. On the contrary, suppose that $\mathcal{M}_k(G)=2$. Let $S=\{u,v\}$ be a $\mathcal{M}_k(G)$-set.
Notice that $u$ and $v$ must be adjacent. So, $\delta_{\overline{S}}(u)\le 1$ and $\delta_{\overline{S}}(v)\le 1$
and $G$ must contain at most four vertices.
Moreover, for every vertex $x\notin \{u,v\}$ it follows
$\delta_{\overline{S}}(x)\le 1$. Thus, $\delta(G)\leq 2$, $k\in \{0,1\}$ and we have the following cases. If
$\delta_{\overline{S}}(u)=0$ and $\delta_{\overline{S}}(v)=0$, then $G$ is isomorphic
to $P_2$. If $\delta_{\overline{S}}(u)=1$ and $\delta_{\overline{S}}(v)=0$ (or vice versa), then $G$
is isomorphic to $P_3$. If $\delta_{\overline{S}}(u)=1$ and $\delta_{\overline{S}}(v)=1$,
then $G$ is isomorphic either to $P_4$, $C_3$ or $C_4$, which completes the proof.
\end{proof}

\begin{proposition}\label{characterization-n}
Let $G$ be a graph of order $n$ and minimum degree $\delta$. Then $\mathcal{M}_k(G)=n$ if and only if $k=\left\lfloor\frac{\delta}{2}\right\rfloor$ and either
\begin{enumerate}[{\em (i)}]
\item $\delta$ is even and every vertex of $G$ is adjacent to a vertex of degree $\delta$ or $\delta+1$, or
\item $\delta$ is odd and every vertex of $G$ is adjacent to a vertex of degree $\delta$.
\end{enumerate}
\end{proposition}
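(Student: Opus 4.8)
The plan is to reduce the statement to a purely local condition at each vertex. First I would record the easy but crucial monotonicity fact: if $M$ is a $k$-monopoly in $G$ and $M\subseteq M'\subseteq V$, then $\delta_{M'}(v)\ge\delta_M(v)\ge\frac{\delta(v)}{2}+k$ for every $v$, so $M'$ is again a $k$-monopoly. Hence the family of $k$-monopolies of $G$ is closed under taking supersets, and since $V$ itself is always a $k$-monopoly (because $k\le\lfloor\delta/2\rfloor\le\frac{\delta(v)}{2}$ for every $v$), any set of cardinality less than $n$ is contained in some $V-\{w\}$; thus $\mathcal{M}_k(G)=n$ if and only if no set of the form $V-\{w\}$, $w\in V$, is a $k$-monopoly.

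Next I would analyse a single set $M=V-\{w\}$. Here $\overline{M}=\{w\}$, so $\delta_{\overline{M}}(v)=1$ when $v\in N(w)$ and $\delta_{\overline{M}}(v)=0$ otherwise. Plugging this into the reformulated $k$-monopoly condition \eqref{cond-monopoly-grado1}, the inequality is automatic for $v\notin N(w)$ and reads $k\le\frac{\delta(v)-2}{2}$ for $v\in N(w)$. Since $k$ is an integer, this is equivalent to $k\le\lfloor\delta(v)/2\rfloor-1$. Consequently $V-\{w\}$ fails to be a $k$-monopoly precisely when $w$ has a neighbour $v$ with $k\ge\lfloor\delta(v)/2\rfloor$, and therefore $\mathcal{M}_k(G)=n$ if and only if every vertex of $G$ has a neighbour $v$ with $k\ge\lfloor\delta(v)/2\rfloor$.

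Then I would extract the degree conditions. Because $\delta(v)\ge\delta$ gives $\lfloor\delta(v)/2\rfloor\ge\lfloor\delta/2\rfloor$, while the admissible range of $k$ forces $k\le\lfloor\delta/2\rfloor$, the inequality $k\ge\lfloor\delta(v)/2\rfloor$ can hold only if $k=\lfloor\delta/2\rfloor$ and $\lfloor\delta(v)/2\rfloor=\lfloor\delta/2\rfloor$. So $\mathcal{M}_k(G)=n$ if and only if $k=\lfloor\delta/2\rfloor$ and every vertex has a neighbour $v$ with $\lfloor\delta(v)/2\rfloor=\lfloor\delta/2\rfloor$. Finally I would split on the parity of $\delta$: if $\delta$ is even, $\lfloor\delta(v)/2\rfloor=\delta/2$ combined with $\delta(v)\ge\delta$ means $\delta(v)\in\{\delta,\delta+1\}$, which is case (i); if $\delta$ is odd, $\lfloor\delta(v)/2\rfloor=(\delta-1)/2$ combined with $\delta(v)\ge\delta$ forces $\delta(v)=\delta$, which is case (ii).

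I do not expect a serious obstacle here; the only delicate point is the integrality bookkeeping — passing correctly between $k\le\frac{\delta(v)-2}{2}$ and $k\le\lfloor\delta(v)/2\rfloor-1$, and then making the parity split so that the ``local neighbour'' condition matches (i) and (ii) exactly rather than approximately. Everything else is routine.
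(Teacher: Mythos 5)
Your proof is correct and follows essentially the same route as the paper's: both reduce the question to whether a set of the form $V-\{w\}$ can be a $k$-monopoly, locate the failure at a neighbour $u$ of the removed vertex via $\delta(u)\le 2k+1$ (equivalently $k\ge\lfloor\delta(u)/2\rfloor$), and finish with the same integrality and parity bookkeeping. The only difference is organizational: you isolate the superset-monotonicity of $k$-monopolies up front to handle both directions at once, whereas the paper inlines that observation in its converse direction by arguing directly for an arbitrary proper subset $M$.
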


\begin{proof}
Suppose $\mathcal{M}_k(G)=n$. Hence, for any vertex $v\in V(G)$, $M=V(G)-\{v\}$ is not $k$-monopoly in $G$. Thus the vertex $v$ or some vertex $u\in N(v)$ does not satisfy the monopoly condition. If $\delta_M(v)<\delta_{\overline{M}}(v)+2k$, then we have that $\delta(v)<2k\le \delta$, a contradiction. Thus $\delta_M(u)<\delta_{\overline{M}}(u)+2k$, which leads to $\delta(u)-1<1+2k$. So $\delta(u)\le 2k+1$. As a consequence, we obtain that $k\ge \frac{\delta-1}{2}$ (or equivalently $k\ge \left\lceil\frac{\delta-1}{2}\right\rceil$). Since $k\le\left\lfloor\frac{\delta}{2}\right\rfloor$, we obtain that $k=\left\lfloor\frac{\delta}{2}\right\rfloor=\left\lceil\frac{\delta-1}{2}\right\rceil$. Thus, $\delta(u)\le 2k+1=2\left\lceil\frac{\delta-1}{2}\right\rceil+1$. Hence, if $\delta$ is even, then we have that $\delta(u)\le \delta+1$, and if $\delta$ is odd, then we have that $\delta(u)\le \delta$. Therefore, (i) and (ii) follow.

On the other hand, suppose $k=\left\lfloor\frac{\delta}{2}\right\rfloor$. Assume $\delta$ is even and every vertex of $G$ is
adjacent to a vertex of degree $\delta$ or $\delta+1$. Hence, let $M\subset V(G)$, let $x\notin M$ and let $u\in N(x)$ having degree $\delta$ or $\delta+1$. So we have,
$$\delta_M(u)\le\delta< 2\left\lfloor\frac{\delta}{2}\right\rfloor+1=2k+1\le \delta_{\overline{M}}(u)+2k.$$
Thus, $M$ is not a $k$-monopoly.

Now, suppose $\delta$ is odd and every vertex of $G$ is adjacent to a vertex of degree $\delta$. As above let $M'\subset V(G)$, let $x'\notin M'$ and let $u'\in N(x')$ having degree $\delta$. So we have,
$$\delta_{M'}(u')<\delta=2\left\lfloor\frac{\delta}{2}\right\rfloor+1=2k+1\le \delta_{\overline{M'}}(u')+2k.$$
Thus, $M'$ is not a $k$-monopoly.

Therefore, any proper subset of $V(G)$ is not a $k$-monopoly and we have that $\mathcal{M}_k(G)=n$.
\end{proof}

The wheel graph of order $n$ is defined as $W_{1,n-1}=K_1+C_{n-1}$, where $+$ represents the join of mentioned graphs. The fan graph $F_{1,n-1}$ of order $n$ is defined as the graph $K_1+P_{n-1}$.

\begin{corollary}$\;$\label{coro-values-n}
\begin{enumerate}[{\em (i)}]
\item For any $r$-regular graph $G$ of order $n$, $\mathcal{M}_{\left\lfloor\frac{r}{2}\right\rfloor}(G)=n$.
\item For any wheel graph $W_{1,n-1}$, $\mathcal{M}_1(W_{1,n-1})=n$.
\item For any fan graph $F_{1,n-1}$, $\mathcal{M}_1(F_{1,n-1})=n$.
\item For any bipartite graph $K_{r,r+1}$, $r$ even, $\mathcal{M}_{\left\lfloor\frac{r}{2}\right\rfloor}(K_{r,r+1})=2r+1$.
\end{enumerate}
\end{corollary}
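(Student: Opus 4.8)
The plan is to derive each of the four equalities directly from Proposition~\ref{characterization-n}: for each family I would read off the minimum degree $\delta$, confirm that the subscript appearing in the statement equals $\left\lfloor\frac{\delta}{2}\right\rfloor$ (which is the right endpoint of the admissible range for $k$, so the parameter is legitimate), and then verify the pertinent adjacency condition (i) or (ii) of that proposition. For part~(i), an $r$-regular graph has $\delta=r$, and every vertex is adjacent only to vertices of degree $r=\delta$; if $r$ is even this is precisely condition~(i) of Proposition~\ref{characterization-n} (with the alternative ``$\delta+1$'' simply unused), and if $r$ is odd it is condition~(ii). Since $\left\lfloor\frac{r}{2}\right\rfloor=\left\lfloor\frac{\delta}{2}\right\rfloor$ in either case, the proposition gives $\mathcal{M}_{\left\lfloor r/2\right\rfloor}(G)=n$.

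For part~(ii), in $W_{1,n-1}=K_1+C_{n-1}$ every rim vertex has degree $3$ and the hub has degree $n-1\ge 3$, so $\delta=3$ is odd and $k=1=\left\lfloor\frac{\delta}{2}\right\rfloor$. The hub is adjacent to each rim vertex, which has degree $3=\delta$, and each rim vertex is adjacent to its two cycle-neighbours, also of degree $3$; hence condition~(ii) of Proposition~\ref{characterization-n} holds and $\mathcal{M}_1(W_{1,n-1})=n$. For part~(iii), in $F_{1,n-1}=K_1+P_{n-1}$ the two end-vertices of the path have degree $2$, the interior path vertices have degree $3$, and the hub has degree $n-1$, so $\delta=2$ is even and $k=1=\left\lfloor\frac{\delta}{2}\right\rfloor$. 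The hub is adjacent to an end-vertex of the path, of degree $2=\delta$, and every path vertex is adjacent to another path vertex, whose degree is $2$ or $3$, i.e.\ $\delta$ or $\delta+1$; hence condition~(i) of Proposition~\ref{characterization-n} holds and $\mathcal{M}_1(F_{1,n-1})=n$.

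For part~(iv), in $K_{r,r+1}$ the $r$ vertices of the smaller part have degree $r+1$ and the $r+1$ vertices of the larger part have degree $r$, so $\delta=r$; as $r$ is even, $k=\frac{r}{2}=\left\lfloor\frac{\delta}{2}\right\rfloor$. Every vertex of the smaller part is adjacent to vertices of degree $r=\delta$, and every vertex of the larger part is adjacent to vertices of degree $r+1=\delta+1$, so condition~(i) of Proposition~\ref{characterization-n} is met and $\mathcal{M}_{r/2}(K_{r,r+1})=n=2r+1$.

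There is no real obstacle here; the one thing to keep track of is the handful of small orders in which the hub's own degree falls into $\{\delta,\delta+1\}$, so that the wheel degenerates to $K_4$ or the fan to $K_3$. In those cases the graph is regular and the equality is already supplied by part~(i), so no separate argument is needed and the remaining verifications are routine degree counts.
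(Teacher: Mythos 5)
Your proposal is correct and is exactly the argument the paper intends: the corollary is stated without proof as a direct consequence of Proposition~\ref{characterization-n}, and you supply precisely the routine verification of $k=\left\lfloor\delta/2\right\rfloor$ and of condition (i) or (ii) for each family, including the harmless degenerate cases $W_{1,3}\cong K_4$ and $F_{1,2}\cong K_3$.
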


We continue this section by obtaining exact values for some graph classes. Recall that, by Corollary \ref{value-complete}, for
$k\in \left\{1-\left\lceil \frac{\delta(G)}{2}\right\rceil,\ldots,\left\lfloor \frac{\delta(G)}{2}\right\rfloor\right\}$ we have $\mathcal{M}_k(K_n)=\left\lceil\frac{n+2k+1}{2}\right\rceil$. We continue with complete bipartite graphs.

\begin{proposition}
For every complete bipartite graph $K_{r,t}$ and every
$k\in \left\{1-\left\lceil \frac{\delta(G)}{2}\right\rceil,\ldots,\left\lfloor \frac{\delta(G)}{2}\right\rfloor\right\}$,
$$\mathcal{M}_k(K_{r,t})=\left\lceil\frac{r+2k}{2}\right\rceil+\left\lceil\frac{t+2k}{2}\right\rceil.$$
\end{proposition}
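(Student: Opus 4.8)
The plan is to work directly with the bipartition of $K_{r,t}$. Write $A$ and $B$ for the two parts, with $|A|=r$ and $|B|=t$ (assume $r,t\geq 1$ so that there are no isolated vertices), and recall that every vertex of $A$ has open neighborhood exactly $B$, hence degree $t$, while every vertex of $B$ has open neighborhood exactly $A$, hence degree $r$; thus $\delta(K_{r,t})=\min\{r,t\}$. The key observation is that for any set $M$ and any $v\in A$ we have $\delta_M(v)=|M\cap B|$, and for any $v\in B$ we have $\delta_M(v)=|M\cap A|$. So the $k$-monopoly condition collapses to two cardinality inequalities that are independent of which vertex of $A$ (resp. $B$) we test.

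For the lower bound, I would let $M$ be a $\mathcal{M}_k(K_{r,t})$-set, pick any $v\in A$, and use $|M\cap B|=\delta_M(v)\geq\frac{t}{2}+k$; since $|M\cap B|$ is an integer this forces $|M\cap B|\geq\left\lceil\frac{t+2k}{2}\right\rceil$. Symmetrically, testing a vertex of $B$ gives $|M\cap A|\geq\left\lceil\frac{r+2k}{2}\right\rceil$. Adding these yields $\mathcal{M}_k(K_{r,t})=|M\cap A|+|M\cap B|\geq\left\lceil\frac{r+2k}{2}\right\rceil+\left\lceil\frac{t+2k}{2}\right\rceil$.

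For the upper bound I would exhibit a $k$-monopoly of exactly this size: choose $M_A\subseteq A$ with $|M_A|=\left\lceil\frac{r+2k}{2}\right\rceil$ and $M_B\subseteq B$ with $|M_B|=\left\lceil\frac{t+2k}{2}\right\rceil$, and set $M=M_A\cup M_B$. Then for $v\in A$, $\delta_M(v)=|M_B|\geq\frac{t+2k}{2}=\frac{\delta(v)}{2}+k$, and analogously $\delta_M(v)=|M_A|\geq\frac{\delta(v)}{2}+k$ for $v\in B$; hence $M$ is a $k$-monopoly and $\mathcal{M}_k(K_{r,t})\leq\left\lceil\frac{r+2k}{2}\right\rceil+\left\lceil\frac{t+2k}{2}\right\rceil$, matching the lower bound.

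The only genuinely delicate point — and the step I expect to need the most care — is that this construction is legitimate, i.e. that $1\leq\left\lceil\frac{r+2k}{2}\right\rceil\leq r$ and $1\leq\left\lceil\frac{t+2k}{2}\right\rceil\leq t$, so that $M_A$ and $M_B$ exist inside $A$ and $B$. The two upper estimates follow from $k\leq\left\lfloor\frac{\delta}{2}\right\rfloor\leq\left\lfloor\frac{\min\{r,t\}}{2}\right\rfloor$ via a short parity split (for instance, if $r$ is odd then $k\leq\frac{r-1}{2}$, so $r+2k\leq 2r-1$ and $\left\lceil\frac{r+2k}{2}\right\rceil=r$), and the two lower estimates follow from $k\geq 1-\left\lceil\frac{\delta}{2}\right\rceil$ in the same fashion; note that the lower-bound argument above already forces $\left\lceil\frac{r+2k}{2}\right\rceil\geq 1$ and $\left\lceil\frac{t+2k}{2}\right\rceil\geq 1$ whenever a $k$-monopoly exists, so only the two upper bounds really use the explicit range of $k$.
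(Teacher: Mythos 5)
Your proof is correct and follows essentially the same route as the paper's: both exploit that $\delta_M(v)$ for $v$ in one part equals the size of $M$'s intersection with the other part, derive the lower bound by applying the monopoly condition to one vertex of each part, and exhibit a matching construction. The only difference is that you additionally verify that $1\le\left\lceil\frac{r+2k}{2}\right\rceil\le r$ and $1\le\left\lceil\frac{t+2k}{2}\right\rceil\le t$ so the construction is feasible --- a point the paper leaves implicit but which does follow from the stated range of $k$, exactly as you argue.
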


\begin{proof}
Let $X$ and $Y$ be the partition sets of $K_{r,t}$ such that $|X|=r$ and $|Y|=t$ and let $S$ be a subset of
vertices of $K_{r,t}$ such that $|S\cap X|=\left\lceil\frac{r+2k}{2}\right\rceil$ and
$|S\cap Y|=\left\lceil\frac{t+2k}{2}\right\rceil$. Let $v$ be a vertex of $K_{r,t}$. If $v\in X$, then
$$\delta_S(v)=\left\lceil\frac{t+2k}{2}\right\rceil\ge\frac{t+2k}{2} =t+2k-\frac{t+2k}{2}
\ge t-\left\lceil\frac{t+2k}{2}\right\rceil+2k=\delta_{\overline{S}}(v)+2k.$$
Analogously, if $v\in Y$, then we obtain that $\delta_S(v)\ge \delta_{\overline{S}}(v)+2k$. Thus, $S$ is a $k$-monopoly in $K_{r,t}$ and we have that $\mathcal{M}_k(K_{r,t})\le \left\lceil\frac{r+2k}{2}\right\rceil+\left\lceil\frac{t+2k}{2}\right\rceil.$

Now, let $M$ be a $\mathcal{M}_k(K_{r,t})$-set and let $u$ be a vertex of $K_{r,t}$. If $u\in X$, then we have that $\delta_M(u)\ge \delta_{\overline{M}}(u)+2k=t-\delta_M(u)+2k$, which leads to $\delta_M(u)\ge \frac{t+2k}{2}$ and, as a consequence, $|Y\cap M|=\delta_M(u)\ge \left\lceil\frac{t+2k}{2}\right\rceil$. Analogously, if $u\in Y$, then we obtain that $|X\cap M|\ge \left\lceil\frac{r+2k}{2}\right\rceil$. Thus, $\mathcal{M}_k(K_{r,t})=|M\cap X|+|M\cap Y|\ge \left\lceil\frac{r+2k}{2}\right\rceil+\left\lceil\frac{t+2k}{2}\right\rceil$ and the proof is complete.
\end{proof}

Next we study $k$-monopolies of cycles and paths. First notice that the case $k=1$ for cycles follows directly from Corollary \ref{coro-values-n} (i), that is, $\mathcal{M}_1(C_n)=n$.

\begin{proposition}\label{cycles}
For every integer $n\ge 3$,
$$\mathcal{M}_0(C_n)=\mathcal{M}_0(P_n)=\left\{\begin{array}{ll}
                              \frac{n}{2} & \mbox{ if $n\equiv 0$ mod 4,} \\
                                &  \\
                              \frac{n+2}{2} & \mbox{ if $n\equiv 2$ mod 4,} \\
                                &  \\
                              \frac{n+1}{2} & \mbox{ if $n\equiv 1$ mod 4 or  $n\equiv 3$ mod 4.}
                            \end{array}
\right.$$
\end{proposition}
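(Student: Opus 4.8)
The plan is to treat paths and cycles simultaneously, since for a $0$-monopoly the key constraint $\delta_M(v)\ge\delta_{\overline M}(v)$ is purely local and vertices of degree $2$ behave identically in $C_n$ and in the interior of $P_n$. First I would record the basic local conditions: a vertex $v$ of degree $2$ forces \emph{both} its neighbors into $M$ as soon as $v\notin M$ (otherwise $\delta_M(v)\le 1<2=\delta_{\overline M}(v)$ is false only if $\delta_M(v)\ge 1$, so in fact $v\notin M$ forces $\delta_M(v)=2$), while a vertex $v$ of degree $2$ with $v\in M$ needs only $\delta_M(v)\ge 1$; a leaf $v$ of $P_n$ forces its unique neighbor into $M$ (this is the observation already made in the excerpt that the neighbor of every leaf lies in every $\mathcal M_0$-set). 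Equivalently: in the cyclic/path sequence of vertices, there are no two consecutive vertices both outside $M$, and moreover every vertex outside $M$ has both neighbors inside $M$. So $\overline M$ is an independent set whose vertices are ``isolated'' in the complement as well — i.e. the pattern of membership, read around the cycle or along the path, never has the block $M\,\overline M\,\overline M$ nor $\overline M\,\overline M$ at all; rather every maximal run of $\overline M$-vertices has length exactly $1$ and is flanked by $M$-vertices. That is automatic once $\overline M$ is independent, so the real content is just: $\overline M$ is an independent set, and (for $P_n$) the two endpoints lie in $M$.

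Next I would turn this into a counting bound. For $C_n$, an independent set in $C_n$ has size at most $\lfloor n/2\rfloor$, so $|M|=n-|\overline M|\ge n-\lfloor n/2\rfloor=\lceil n/2\rceil$; this already gives the lower bound in the cases $n\equiv 0,1,3\pmod 4$. The case $n\equiv 2\pmod 4$ needs an extra unit: here $\lceil n/2\rceil=n/2$, and I must rule out $|M|=n/2$, i.e. rule out $\overline M$ being a \emph{maximum} independent set. A maximum independent set in $C_n$ with $n$ even is forced to be one of the two ``alternating'' sets; but then $\overline M$ is also alternating, so every vertex of $M$ has \emph{both} neighbors outside $M$, giving $\delta_M(v)=0$ for $v\in M$, which violates even the total-domination requirement $\delta_M(v)\ge 1$. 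Hence $|M|\ge n/2+1=(n+2)/2$ when $n\equiv 2\pmod 4$. For $P_n$ the same independence bound plus the endpoint constraint gives the matching lower bounds after a short case check on $n\bmod 4$ (the endpoints being in $M$ is exactly what makes the path bound equal to the cycle bound rather than smaller).

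For the upper bounds I would exhibit explicit sets. Index the vertices $0,1,\dots,n-1$ (a path if we drop the edge $\{n-1,0\}$, a cycle otherwise). When $n\equiv 0\pmod 4$, take $M$ to consist of the vertices in positions $\equiv 1,2\pmod 4$ (a ``two in, two out'' pattern $M\,M\,\overline M\,\overline M$ repeated $n/4$ times): then $\overline M$ is independent, each $v\in\overline M$ has both neighbors in $M$, and each $v\in M$ has at least one neighbor in $M$, so the $0$-monopoly condition holds everywhere; $|M|=n/2$. This pattern also works verbatim for $P_n$ since positions $0$ and $n-1$ then lie in $M$ (positions $0\equiv 0$ — wait, adjust the offset so both ends are covered; concretely for $P_n$ start the block pattern so that position $0$ is an $M$-vertex and position $n-1$ is an $M$-vertex, which is possible since $n$ is a multiple of $4$). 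For $n\equiv 1,2,3\pmod 4$, take the $n\equiv 0$ pattern on an initial segment of length $4\lfloor n/4\rfloor$ and put the remaining $1$, $2$, or $3$ vertices all into $M$; one checks the boundary vertices still satisfy the condition, and the resulting sizes are $\lceil (n+1)/2\rceil$ matching $(n+1)/2$, $(n+2)/2$, $(n+1)/2$ respectively. The routine part is verifying these small end-segment cases; the only genuinely delicate point in the whole argument is the parity obstruction for $n\equiv 2\pmod 4$, where one must invoke the total-domination constraint $\delta_M(v)\ge 1$ (a $0$-monopoly is a total dominating set) to defeat the alternating set — I would flag that as the main obstacle, everything else being independence counting and explicit constructions.
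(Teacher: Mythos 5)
Your key structural lemma is false, and this breaks the lower bound. For an \emph{open} $0$-monopoly the condition at a degree-two vertex $v$ is $\delta_M(v)\ge \delta(v)/2=1$, i.e.\ ``at least one neighbor in $M$'', and this is the same whether or not $v$ itself belongs to $M$ --- the membership of $v$ never enters the condition at $v$. So a vertex outside $M$ does \emph{not} force both of its neighbors into $M$, and $\overline M$ need not be independent: in $C_8$ the set $M=\{v_1,v_2,v_5,v_6\}$ is a minimum $0$-monopoly whose complement contains the edges $v_3v_4$ and $v_7v_0$. In fact your own ``two in, two out'' construction $M\,M\,\overline M\,\overline M$ has adjacent pairs of $\overline M$-vertices and each $\overline M$-vertex has exactly one (not both) neighbor in $M$, directly contradicting the lemma it is meant to illustrate. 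The correct statement is simply that a $0$-monopoly of $C_n$ (or of the degree-two part of $P_n$) is exactly a total dominating set. Consequently your lower bound $|M|\ge n-\lfloor n/2\rfloor$ is not established by independence of $\overline M$ (it is true, but needs the counting argument that each vertex of $M$ totally dominates at most two vertices, so $2|M|\ge n$), and the case $n\equiv 2\pmod 4$ is genuinely incomplete: you must exclude \emph{every} total dominating set of size $n/2$, not only the one whose complement is the alternating (maximum independent) set; sets such as $\{v_0,v_1,v_3\}$ in $C_6$ have non-independent complement and are never considered by your argument. The extra $+1$ for $n\equiv 2\pmod 4$ requires a finer analysis of the block structure of total dominating sets in cycles.

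For comparison, the paper sidesteps all of this for cycles by invoking its Theorem~\ref{regular} ($\mathcal{M}_{1-\lceil r/2\rceil}(G)=\gamma_t(G)$ for $r$-regular $G$, which for $r=2$ gives $\mathcal{M}_0(C_n)=\gamma_t(C_n)$) together with the known formula $\gamma_t(C_n)=\lfloor n/2\rfloor+\lceil n/4\rceil-\lfloor n/4\rfloor$; for paths it gives explicit sets and an induction on $n$ in steps of four. Your upper-bound constructions are essentially the same as the paper's and are fine; it is the lower bound that needs to be repaired, either by citing the total domination number of cycles and paths as the paper does, or by a correct direct counting argument.
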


\begin{proof}
By Theorem \ref{regular}, $\mathcal{M}_0(C_n)=\gamma_t(C_n)$ and it is known from \cite{heni5} that
$\gamma_t(C_n)=\left\lfloor \frac{n}{2}\right\rfloor+\left\lceil \frac{n}{4}\right\rceil-\left\lfloor \frac{n}{4}\right\rfloor$. Hence
we are done with cycles.

Let $V(P_n)=\{v_0,\ldots ,v_{n-1}\}$. We proceed by induction on $k\ge1$ where $n=4k+i$ and $i\in\{-1,0,1,2\}$.
Let $M_n$ be a subset of $V(P_n)$ defined as follows.
\begin{itemize}
\item If $n\equiv 0$ (mod 4), then $M_n=\{v_1,v_2,v_5,v_6,\ldots,v_{n-3},v_{n-2}\}$.
\item If $n\equiv 1$ (mod 4), then $M_n=\{v_1,v_2,v_3,v_6,v_7,v_{10},v_{11},\ldots,v_{n-3},v_{n-2}\}$.
\item If $n\equiv 2$ (mod 4), then $M_n=\{v_0,v_1,v_3,v_4,v_7,v_8,v_{11},v_{12},\ldots,v_{n-3},v_{n-2}\}$.
\item If $n\equiv 3$ (mod 4), then $M_n=\{v_0,v_1,v_4,v_5,\ldots,v_{n-3},v_{n-2}\}$.
\end{itemize}

It is straightforward to check that $M_n$ is a $\mathcal{M}_0(P_n)$-set for $k=1$. Notice that $M_4$ is
the unique $\mathcal{M}_0(P_4)$-set. Let $k>1$.
Set $M_{4(k-1)+i}$ is a $\mathcal{M}_0(P_{4(k-1)+i})$-set by induction hypothesis. Clearly, any $0$-monopoly
$M'$ of $P_n$ contains at least two vertices of the last three vertices $v_{n-3},v_{n-2},v_{n-1}$. Hence, these two vertices have no influence
on the vertices of $M'$ from the first $4(k-1)+i$ vertices of the path $P_{4k+i}$. Therefore
$|M'\cap \{v_0,\ldots ,v_{4(k-1)+i}\}|\geq |M_{4(k-1)+i}|$ and $M_{4k+i}=M_{4(k-1)+i}\cup \{v_{n-3},v_{n-2}\}$
is a $\mathcal{M}_0(P_{4k+i})$-set. It is easy to see that $|M_{4k+i}|$ gives the desired values.
\end{proof}

\section{Partitions into $k$-monopolies}

In this section we present some results about partitioning a graphs into monopolies. To this end, we say that a graph $G=(V,E)$ is $k$-monopoly partitionable if there exists a vertex partition $\Pi=\{S_1,\ldots,S_r\}$ of $V$, $r\ge 2$, such that for every $i\in \{1,\ldots,r\}$, $S_i$ is a $k$-monopoly in $G$.

\begin{theorem} \label{partition}
If a graph $G$ is $k$-monopoly partitionable, for some $k\in\{1-\left\lceil \frac{\delta(G)}{2}\right\rceil,\ldots,\left\lfloor \frac{\delta(G)}{2}\right\rfloor\}$, then $r\le 2-2k$ and $k\le 0$.
\end{theorem}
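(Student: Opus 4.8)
The plan is a counting argument: fix a partition $\Pi=\{S_1,\dots,S_r\}$ of $V$ into $k$-monopolies and, for each vertex $v$, compare the number $\delta(v)$ of its neighbours with the lower bounds on $\delta_{S_i}(v)$ supplied by the $k$-monopoly condition for each part. Since the sets $S_1,\dots,S_r$ partition $N(v)$, we have $\delta(v)=\sum_{i=1}^r\delta_{S_i}(v)$, and $\delta_{S_i}(v)\ge\frac{\delta(v)}{2}+k$ for every $i$.

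I would first deduce $k\le 0$. Summing the $k$-monopoly conditions over $i$ gives $\delta(v)\ge\frac r2\,\delta(v)+rk$ for every $v$; since $r\ge 2$ we have $\frac r2\,\delta(v)\ge\delta(v)$, so $rk\le 0$, and hence $k\le 0$.

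For the bound $r\le 2-2k$ the extra ingredient is integrality combined with the admissible range of $k$. Because $\delta_{S_i}(v)$ is an integer, $\delta_{S_i}(v)\ge\bigl\lceil\frac{\delta(v)}{2}+k\bigr\rceil=\bigl\lceil\frac{\delta(v)}{2}\bigr\rceil+k$. Set $D=2\bigl\lceil\frac{\delta(v)}{2}\bigr\rceil$, so that $D\in\{\delta(v),\delta(v)+1\}$. Summing over $i$ yields $\delta(v)\ge r\bigl(\frac D2+k\bigr)$. Now the constraint $k\ge 1-\bigl\lceil\frac{\delta(G)}{2}\bigr\rceil$ rewrites as $2\bigl\lceil\frac{\delta(G)}{2}\bigr\rceil\ge 2-2k$, and since $\delta(v)\ge\delta(G)$ this gives $D\ge 2\bigl\lceil\frac{\delta(G)}{2}\bigr\rceil\ge 2-2k$; in particular $\frac D2+k\ge 1>0$ and $D+2k\ge 2$. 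Dividing, $r\le\frac{2\delta(v)}{D+2k}\le\frac{2D}{D+2k}=2-\frac{4k}{D+2k}$, and since $k\le 0$ and $D+2k\ge 2$ we have $0\le-\frac{4k}{D+2k}\le-\frac{4k}{2}=-2k$, whence $r\le 2-2k$.

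The step I expect to be the crux is precisely this last one: using only $\delta_{S_i}(v)\ge\frac{\delta(v)}{2}+k$ one gets $r\le\frac{2\delta(v)}{\delta(v)+2k}$, which when $\delta(G)$ is odd (where the range of $k$ only forces $\delta(G)\ge 1-2k$) yields the weaker bound $r\le 2-4k$. Rounding $\delta_{S_i}(v)$ up to $\bigl\lceil\frac{\delta(v)}{2}\bigr\rceil+k$ and then combining $D\le\delta(v)+1$ with the lower bound $D\ge 2-2k$ coming from the admissible range of $k$ is exactly what upgrades this to $r\le 2-2k$; everything else is elementary arithmetic.
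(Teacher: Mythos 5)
Your proof is correct, but it follows a genuinely different route from the paper's. The paper works with the equivalent form $\delta_{S_i}(v)\ge \delta_{\overline{S_i}}(v)+2k$ of the monopoly condition, applies it twice in a chain (once for a part $S_i$ and once for a second part $S_j$), and lower-bounds the contribution of the remaining $r-2$ parts by using that every part is a total dominating set, i.e.\ $\delta_{S_\ell}(v)\ge 1$ for every vertex $v$ and every $\ell$; this collapses to the single inequality $0\ge 4k+2r-4$, from which both $r\le 2-2k$ and $k\le 0$ drop out at once, with no parity discussion. You instead sum the condition $\delta_{S_i}(v)\ge \frac{\delta(v)}{2}+k$ over all $r$ parts, which gives $k\le 0$ immediately, and then sharpen the same summation by integrality ($\delta_{S_i}(v)\ge\lceil\delta(v)/2\rceil+k$) together with the lower end of the admissible range of $k$ (which yields $2\lceil\delta(G)/2\rceil\ge 2-2k$) to recover $2r-4+4k\le 0$. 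The two arguments ultimately rest on the same facts --- note that the paper's hypothesis $\delta_{S_\ell}(v)\ge 1$ is itself a consequence of the monopoly condition, integrality, and the range of $k$, which is exactly the combination you invoke --- but they package them differently: the paper's chaining avoids any ceiling manipulation or case split on parity, while your global summation makes explicit where the lower bound $k\ge 1-\lceil\delta(G)/2\rceil$ is actually used, and your observation that the unrounded estimate only gives $r\le 2-4k$ when $\delta(G)$ is odd is a correct and worthwhile diagnosis of why the rounding step is indispensable in your version.
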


\begin{proof}
Let $S_i,S_j\in \Pi$ and let $v$ be a vertex of $G$. Then we have that
\begin{align*}
\delta_{S_i}(v)&\ge \delta_{\overline{S_i}}(v)+2k\\
&=2k+\sum_{\ell=1,\ell\ne i}^r\delta_{S_{\ell}}(v)\\
&=\delta_{S_j}(v)+2k+\sum_{\ell=1,\ell\ne i,j}^r\delta_{S_{\ell}}(v)\\
&\ge \delta_{\overline{S_j}}(v)+4k+\sum_{\ell=1,\ell\ne i,j}^r\delta_{S_{\ell}}(v)
\end{align*}
Since for every $u$ of $G$, $\delta_{S_{\ell}}(u)\ge 1$ for every $\ell\in \{1,\ldots,r\}$, we
obtain that $\displaystyle\sum_{\ell=1,\ell\ne i,j}^r\delta_{S_{\ell}}(u)\ge r-2$.
So,
\begin{align*}
\delta_{S_i}(v)&\ge \delta_{\overline{S_j}}(v)+4k+r-2\\
&=4k+r-2+\sum_{\ell=1,\ell\ne j}^r\delta_{S_{\ell}}(v)\\
&=\delta_{S_i}(v)+4k+r-2+\sum_{\ell=1,\ell\ne i,j}^r\delta_{S_{\ell}}(v)\\
&\ge \delta_{S_i}(v)+4k+2r-4.
\end{align*}
Thus $2k+r-2\le 0$, which leads to $r\le 2-2k$ and $k\le 1-r/2$. Since $r\ge 2$, we have that $k\le 0$.
\end{proof}

From the above result we have that $G$ can be only partitioned into at most $2-2k$ $k$-monopolies for $k\le 0$. The particular case $k=0$ is next studied. Notice that for instance, cycles of order $4t$ and hypercubes $Q_{2t}$ with $t\ge 1$ are examples of graphs having a partition into two $0$-monopolies.

\begin{proposition}
Let $G$ be a graph having a vertex partition into two $0$-monopolies $\{X,Y\}$. Then the following assertions are satisfied.
\begin{enumerate}[{\em (i)}]
\item For every vertex $v$ of $G$, $\delta_X(v)=\delta_Y(v)$.
\item For every vertex $v$ of $G$, $\delta(v)$ is an even number.
\item The size $m_X$ of $\langle X\rangle$ equals the size $m_Y$ of $\langle Y\rangle$.
\item The cardinality of the edge cut set $c(X,Y)$ produced by the vertex partition $\{X,Y\}$ equals the size $m$ of $G$ minus two times the size of $\langle X\rangle$.
\end{enumerate}
\end{proposition}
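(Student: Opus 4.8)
The plan is to derive everything from the defining inequalities of a $0$-monopoly applied to the partition $\{X,Y\}$, exactly in the spirit of the proof of Theorem~\ref{partition} but specialized to $k=0$ and $r=2$. First I would fix a vertex $v$ of $G$. Since $X$ is a $0$-monopoly, the $k$-monopoly condition with $k=0$ gives $\delta_X(v)\ge \delta_{\overline{X}}(v)=\delta_Y(v)$, because $\overline{X}=Y$ in a two-part partition. Symmetrically, since $Y$ is a $0$-monopoly, $\delta_Y(v)\ge\delta_X(v)$. Combining the two inequalities forces $\delta_X(v)=\delta_Y(v)$, which is assertion~(i). Assertion~(ii) is then immediate: $\delta(v)=\delta_X(v)+\delta_Y(v)=2\delta_X(v)$ is even for every $v$.

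For assertion~(iii) I would count the edges inside $\langle X\rangle$ by summing degrees. Summing (i) over all $v\in X$ yields $\sum_{v\in X}\delta_X(v)=\sum_{v\in X}\delta_Y(v)$; the left-hand side equals $2m_X$ while the right-hand side equals $c(X,Y)$ (each edge of the cut has exactly one endpoint in $X$). Doing the same sum over all $v\in Y$ gives $2m_Y=c(X,Y)$ as well. Hence $2m_X=c(X,Y)=2m_Y$, so $m_X=m_Y$, which is~(iii). For assertion~(iv), I would use $m=m_X+m_Y+c(X,Y)$ together with $m_X=m_Y$ and $2m_X=c(X,Y)$ from the previous step: then $m=2m_X+c(X,Y)=c(X,Y)+c(X,Y)\cdot\tfrac{1}{1}$ — more cleanly, $m=m_X+m_Y+c(X,Y)=2m_X+c(X,Y)$, so $c(X,Y)=m-2m_X=m-2m_Y$, which is exactly the claimed formula.

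None of the steps presents a genuine obstacle; the only point that needs a word of care is the bookkeeping identity $\sum_{v\in X}\delta_Y(v)=c(X,Y)$, i.e. that summing the $Y$-degrees of vertices in $X$ counts each cut edge exactly once (it has one endpoint in $X$ and one in $Y$), and dually $\sum_{v\in X}\delta_X(v)=2m_X$ (each internal edge of $\langle X\rangle$ is counted from both endpoints). Once these two standard counting facts are stated, (i)–(iv) follow by the short chain of equalities above, and the proof is complete.
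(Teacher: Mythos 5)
Your proof is correct and follows essentially the same route as the paper: parts (i) and (ii) are identical, and parts (iii)--(iv) rest on the same degree-sum bookkeeping, with the only cosmetic difference that you sum the identity $\delta_X(v)=\delta_Y(v)$ over $X$ and over $Y$ separately (yielding the slightly stronger intermediate fact $2m_X=c(X,Y)=2m_Y$) where the paper sums over all of $V$ and cancels the mixed terms. The garbled clause in your derivation of (iv) is immediately repaired by the clean version you give right after, so nothing is missing.
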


\begin{proof}
For every vertex $v$ of $G$ we have that $\delta_X(v)\geq \delta_{Y}(v)$ and $\delta_Y(v)\geq \delta_{X}(v)$. Thus, (i) follows. Now, (ii) follows from the fact that $\delta(v)=\delta_X(v)+\delta_Y(v)=2\delta_X(v)=2\delta_Y(v)$. To prove (iii) we consider the following
$$\sum_{v\in X}\delta_X(v)+\sum_{v\in Y}\delta_X(v)=\sum_{v\in X}\delta_Y(v)+\sum_{v\in Y}\delta_Y(v).$$
Since, $\sum_{v\in Y}\delta_X(v)=\sum_{v\in X}\delta_Y(v)$ we have the result. As a consequence, $m=c(X,Y)+m_X+m_Y$ and by (iii) we obtain (iv).
\end{proof}

A natural question which now arises concerning the computational complexity on the existence of such partitions mentioned above. That is for instance, given a graph $G$, can we decide whether $G$ is $k$-monopoly partitionable? Moreover, if the answer is positive, can we find such partitions by using some efficient algorithm?


\nocite{*}
\bibliographystyle{abbrvnat}


\bibliography{monopolies-1}

\begin{thebibliography}{29}
\providecommand{\natexlab}[1]{#1}
\providecommand{\url}[1]{\texttt{#1}}
\expandafter\ifx\csname urlstyle\endcsname\relax
  \providecommand{\doi}[1]{doi: #1}\else
  \providecommand{\doi}{doi: \begingroup \urlstyle{rm}\Url}\fi

\bibitem[Bermond et~al.(2003)Bermond, Bond, Peleg, and
  Perennes]{small-coalition}
J.-C. Bermond, J.~Bond, D.~Peleg, and S.~Perennes.
\newblock The power of small coalitions in graphs.
\newblock \emph{Discrete Applied Mathematics}, 127\penalty0 (3):\penalty0
  399--414, 2003.
\newblock \doi{10.1016/S0166-218X(02)00241-X}.
\newblock URL
  \url{http://www.scopus.com/inward/record.url?eid=2-s2.0-84867959143&partnerID=40&md5=df993bddbdf24f26b0f2554323bce429}.

\bibitem[Dunbar et~al.(1995)Dunbar, Hedetniemi, Henning, and
  Slater]{signed-one}
J.~Dunbar, S.~Hedetniemi, M.~Henning, and P.~Slater.
\newblock Signed domination in graphs.
\newblock \emph{Graph Theory, Combinatorics, and Applications}, pages 311--322,
  1995.

\bibitem[Dwork et~al.(1988)Dwork, Peleg, Pippenger, and Upfal]{consensus}
C.~Dwork, D.~Peleg, N.~Pippenger, and E.~Upfal.
\newblock Fault tolerance in networks of bounded degree.
\newblock \emph{SIAM Journal on Computing}, 17\penalty0 (5):\penalty0 975--988,
  1988.
\newblock URL
  \url{http://www.scopus.com/inward/record.url?eid=2-s2.0-0024091299&partnerID=40&md5=009c11dd0d050861b69499d3fbf93566}.

\bibitem[Favaron et~al.(2004)Favaron, Fricke, Goddard, Hedetniemi, Hedetniemi,
  Kristiansen, Laskar, and Skaggs]{fava}
O.~Favaron, G.~Fricke, W.~Goddard, S.~Hedetniemi, S.~Hedetniemi,
  P.~Kristiansen, R.~C. Laskar, and R.~Skaggs.
\newblock Offensive alliances in graphs.
\newblock \emph{Discussiones Mathematicae Graph Theory}, 24:\penalty0 263--275,
  2004.

\bibitem[Fernau(2013)]{fernau-PC}
H.~Fernau.
\newblock Private communication.
\newblock 2013.

\bibitem[Fernau and Rodríguez-Vel\'azquez(2014)]{fernau-survey}
H.~Fernau and J.~Rodríguez-Vel\'azquez.
\newblock A survey on alliances and related parameters in graphs.
\newblock \emph{Electronic Journal of Graph Theory and Applications},
  2\penalty0 (1):\penalty0 70--86, 2014.

\bibitem[Fernau et~al.(2009)Fernau, Rodríguez-Vel\'azquez, and
  Sigarreta]{fernau-off}
H.~Fernau, J.~Rodríguez-Vel\'azquez, and J.~Sigarreta.
\newblock Offensive r-alliances in graphs.
\newblock \emph{Discrete Applied Mathematics}, 157\penalty0 (1):\penalty0
  177--182, 2009.
\newblock \doi{10.1016/j.dam.2008.06.001}.
\newblock URL
  \url{http://www.scopus.com/inward/record.url?eid=2-s2.0-56649115970&partnerID=40&md5=ec6162e86b28fe3af48f2f1c64169a75}.

\bibitem[Fernau et~al.(to appear)Fernau, Rodríguez-Vel\'azquez, and
  Sigarreta]{fernau}
H.~Fernau, J.~Rodríguez-Vel\'azquez, and J.~Sigarreta.
\newblock Global powerful r-alliances and total k-domination in graphs.
\newblock \emph{Utilitas Mathematica}, to appear.

\bibitem[Flocchini et~al.(2003)Flocchini, Královič, Ružička, Roncato, and
  Santoro]{monopol1}
P.~Flocchini, R.~Královič, P.~Ružička, A.~Roncato, and N.~Santoro.
\newblock On time versus size for monotone dynamic monopolies in regular
  topologies.
\newblock \emph{Journal of Discrete Algorithms}, 1\penalty0 (2):\penalty0
  129--150, 2003.
\newblock \doi{10.1016/S1570-8667(03)00022-4}.
\newblock URL
  \url{http://www.scopus.com/inward/record.url?eid=2-s2.0-77952569578&partnerID=40&md5=bc3c283a056a43366c34cdffad9c85e3}.

\bibitem[Garcia-Molina and Barbara(1985)]{voting}
H.~Garcia-Molina and D.~Barbara.
\newblock How to assign votes in a distributed system.
\newblock \emph{Journal of the ACM}, 32\penalty0 (4):\penalty0 841--860, 1985.
\newblock \doi{10.1145/4221.4223}.
\newblock URL
  \url{http://www.scopus.com/inward/record.url?eid=2-s2.0-0022145769&partnerID=40&md5=19a2ca07b3b1f38508e2926817239acd}.

\bibitem[Hattingh et~al.(1995)Hattingh, Henning, and Slater]{heni}
J.~Hattingh, M.~Henning, and P.~Slater.
\newblock The algorithmic complexity of signed domination in graphs.
\newblock \emph{Australasian Journal of Combinatorics}, 12:\penalty0 101--112,
  1995.

\bibitem[Haynes et~al.(2003)Haynes, Hedetniemi, and
  Henning]{GlobalalliancesOne}
T.~Haynes, S.~Hedetniemi, and M.~Henning.
\newblock Global defensive alliances in graphs.
\newblock \emph{Electronic Journal of Combinatorics}, 10\penalty0 (1 R), 2003.
\newblock URL
  \url{http://www.scopus.com/inward/record.url?eid=2-s2.0-3042619803&partnerID=40&md5=de525879814c4abbf82b4b7d27f52fa4}.

\bibitem[Henning(2000)]{heni5}
M.~Henning.
\newblock Graphs with large total domination number.
\newblock \emph{Journal of Graph Theory}, 35\penalty0 (1):\penalty0 21--45,
  2000.
\newblock URL
  \url{http://www.scopus.com/inward/record.url?eid=2-s2.0-23044518657&partnerID=40&md5=eda9769f06132678c59dd3470b350fe7}.

\bibitem[Henning(2004)]{heni2}
M.~Henning.
\newblock Signed total domination in graphs.
\newblock \emph{Discrete Mathematics}, 278\penalty0 (1-3):\penalty0 109--125,
  2004.
\newblock \doi{10.1016/j.disc.2003.06.002}.
\newblock URL
  \url{http://www.scopus.com/inward/record.url?eid=2-s2.0-1142263889&partnerID=40&md5=20dd482bbe17d6173990b948169f1bbe}.

\bibitem[Khoshkhah et~al.(2013)Khoshkhah, Nemati, Soltani, and Zaker]{khoskah}
K.~Khoshkhah, M.~Nemati, H.~Soltani, and M.~Zaker.
\newblock A study of monopolies in graphs.
\newblock \emph{Graphs and Combinatorics}, 29\penalty0 (5):\penalty0
  1417--1427, 2013.
\newblock \doi{10.1007/s00373-012-1214-7}.
\newblock URL
  \url{http://www.scopus.com/inward/record.url?eid=2-s2.0-84882903032&partnerID=40&md5=9962cc0de8828815ac6749b93652024d}.

\bibitem[Kristiansen et~al.(2004)Kristiansen, Hedetniemi, and
  Hedetniemi]{alliancesOne}
P.~Kristiansen, S.~Hedetniemi, and S.~Hedetniemi.
\newblock Alliances in graphs.
\newblock \emph{Journal of Combinatorial Mathematics and Combinatorial
  Computing}, 48:\penalty0 157--177, 2004.

\bibitem[Laskar and Pfaff(1984)]{LaPf}
R.~Laskar and J.~Pfaff.
\newblock Domination and irredundance in split graphs.
\newblock \emph{Technical Report, Department of Mathematical Sciences, Clemson
  University}, 10\penalty0 (1):\penalty0 49--60, 1984.
\newblock \doi{10.1016/j.jda.2011.12.019}.
\newblock URL
  \url{http://www.scopus.com/inward/record.url?eid=2-s2.0-84855548350&partnerID=40&md5=468858cae7328c688f73b0e203b00ac6}.

\bibitem[Liang(2014)]{liang}
H.~Liang.
\newblock On the signed (total) $k$-domination number of a graph.
\newblock \emph{Journal of Combinatorial Mathematics and Combinatorial
  Computing}, 89:\penalty0 87--99, 2014.

\bibitem[Linial et~al.(1993)Linial, Peleg, Rabinovich, and Saks]{monopoly-one}
N.~Linial, D.~Peleg, Y.~Rabinovich, and M.~Saks.
\newblock Sphere packing and local majorities in graphs.
\newblock \emph{\em $2^{nd}$ Israel Symposium on Theory and Computing Systems},
  pages 141--149, 1993.

\bibitem[Mishra(2012)]{mishra}
S.~Mishra.
\newblock Complexity of majority monopoly and signed domination problems.
\newblock \emph{Journal of Discrete Algorithms}, 10\penalty0 (1):\penalty0
  49--60, 2012.
\newblock \doi{10.1016/j.jda.2011.12.019}.
\newblock URL
  \url{http://www.scopus.com/inward/record.url?eid=2-s2.0-84855548350&partnerID=40&md5=468858cae7328c688f73b0e203b00ac6}.

\bibitem[Mishra and Rao(2006)]{monopol2}
S.~Mishra and S.~Rao.
\newblock Minimum monopoly in regular and tree graphs.
\newblock \emph{Discrete Mathematics}, 306\penalty0 (14):\penalty0 1586--1594,
  2006.
\newblock \doi{10.1016/j.disc.2005.06.036}.
\newblock URL
  \url{http://www.scopus.com/inward/record.url?eid=2-s2.0-33745656404&partnerID=40&md5=0f2c2e220120fbe991066315ad65cdc6}.

\bibitem[Mishra et~al.(2002)Mishra, Radhakrishnan, and
  Sivasubramanian]{monopol6}
S.~Mishra, J.~Radhakrishnan, and S.~Sivasubramanian.
\newblock On the hardness of approximating minimum monopoly problems.
\newblock \emph{Lecture Notes in Computer Science (including subseries Lecture
  Notes in Artificial Intelligence and Lecture Notes in Bioinformatics)}, 2556
  LNCS:\penalty0 277--288, 2002.
\newblock URL
  \url{http://www.scopus.com/inward/record.url?eid=2-s2.0-80052395587&partnerID=40&md5=cb24b2adb61a5dbff36910d9addba94c}.

\bibitem[Peleg(2002)]{monopol3}
D.~Peleg.
\newblock Local majorities, coalitions and monopolies in graphs: A review.
\newblock \emph{Theoretical Computer Science}, 282\penalty0 (2):\penalty0
  231--257, 2002.
\newblock \doi{10.1016/S0304-3975(01)00055-X}.
\newblock URL
  \url{http://www.scopus.com/inward/record.url?eid=2-s2.0-0037054318&partnerID=40&md5=32b9c2596729f62832b1564d367adccc}.

\bibitem[Pfaff(1984)]{Pfaf}
J.~Pfaff.
\newblock \emph{Algorithmic complexities of domination-related graph
  parameters}.
\newblock Ph.D. Thesis, Clemson University, 1984.

\bibitem[Shafique and Dutton(2003)]{kdaf}
K.~Shafique and R.~Dutton.
\newblock Maximum alliance-free and minimum alliance-cover sets.
\newblock \emph{Congressus Numerantium}, 162:\penalty0 139--146, 2003.

\bibitem[Shafique and Dutton(2006)]{kdaf1}
K.~Shafique and R.~Dutton.
\newblock A tight bound on the cardinalities of maximun alliance-free and
  minimun alliance-cover sets.
\newblock \emph{Journal of Combinatorial Mathematics and Combinatorial
  Computing}, 56:\penalty0 139--145, 2006.

\bibitem[Sullivan(1986)]{diagnosis}
G.~Sullivan.
\newblock \emph{The complexity of system-level fault diagnosis and
  diagnosability}.
\newblock Ph. D. Thesis, Yale University New Haven. CT, 1986.

\bibitem[Zaker(2012)]{monopol4}
M.~Zaker.
\newblock On dynamic monopolies of graphs with general thresholds.
\newblock \emph{Discrete Mathematics}, 312\penalty0 (6):\penalty0 1136--1143,
  2012.
\newblock \doi{10.1016/j.disc.2011.11.038}.
\newblock URL
  \url{http://www.scopus.com/inward/record.url?eid=2-s2.0-84155191040&partnerID=40&md5=c463fb9151e8e07ebad6dc45643a750e}.

\bibitem[Zelinka(2001)]{signed-total}
B.~Zelinka.
\newblock Signed total domination number of a graph.
\newblock \emph{Czechoslovak Mathematical Journal}, 51\penalty0 (2):\penalty0
  225--229, 2001.
\newblock URL
  \url{http://www.scopus.com/inward/record.url?eid=2-s2.0-23044528767&partnerID=40&md5=9024d6b751af95d67c4254a2d6a091e6}.

\end{thebibliography}

\end{document}